\theoremstyle{plain}
 \newtheorem{prop}{Proposition}[section]
 \newtheorem{lem}{Lemma}[section]
\theoremstyle{definition}
 \newtheorem{rem}{Remark}[section]
 \newtheorem{dfn}{Definition}[section]
\numberwithin{equation}{section}
\renewcommand{\le}{\leqslant}\renewcommand{\leq}{\leqslant}
\renewcommand{\ge}{\geqslant}\renewcommand{\geq}{\geqslant}
\newcommand{\R}{\mathbb{R}}
\newcommand{\M}{\mathbb{M}}
\newcommand{\Prob}{\mathbb{P}}
\newcommand{\PR}{\mathbb{P}}
\newcommand{\E}{\mathbb{E}}
\newcommand{\vague}{\stackrel{\lower0.2ex\hbox{$\scriptscriptstyle
                    \it{v} $}}{\rightarrow}}
\newcommand{\weak}{\stackrel{\lower0.2ex\hbox{$\scriptscriptstyle
                    \it{w} $}}{\rightarrow}}
\newcommand{\what}{\stackrel{\lower0.2ex\hbox{$\scriptscriptstyle
                    \it{\hat{w}} $}}{\rightarrow}}
\newcommand{\eqdis}{\stackrel{\lower0.2ex\hbox{$\scriptscriptstyle
                    \mathrm{d}$}}{=}}
\newcommand{\distr}{\stackrel{\lower0.2ex\hbox{$\scriptscriptstyle
                    \it{d} $}}{\rightarrow}}
\def\bX{\boldsymbol X}
\def\bY{\boldsymbol Y}
\def\bZ{\boldsymbol Z}
\def\bC{\mathbb{C}}
\def\bone{\boldsymbol 1}
\def\ba{\boldsymbol a}
\def\bzero{\boldsymbol 0}
\def\bx{\boldsymbol x}
\def\bz{\boldsymbol z}
\def\by{\boldsymbol y}
\def\binfty{\boldsymbol \infty}
\def\btheta{\boldsymbol \theta}
\def\bbeta{\boldsymbol \beta}
\def\polar{\text{POLAR}}
\def\gpolar{\text{GPOLAR}}
\def\diag{\textsf{diag}}
\def\smallwedge{\textsf{wedge}}
\newcommand\independent{\protect\mathpalette{\protect\independenT}{\perp}}
\def\independenT#1#2{\mathrel{\rlap{$#1#2$}\mkern2mu{#1#2}}}
\def\RV{\mathcal{RV}}
\numberwithin{equation}{section}
\def\Unif{\text{Unif}}
\def\Hillish{\text{Hillish}}
\def\CEV{\text{CEV}}
\def\MRV{\text{MRV}}
\def\HRV{\text{HRV}}
\newcommand{\cinP}{\stackrel{\lower0.2ex\hbox{$\scriptscriptstyle
                    \it{P} $}}{\rightarrow}}
\def\bbeta{\mathbb{\eta}}
\def\bbxi{\mathbb{\xi}}
\definecolor{darkred}{RGB}{200,0,0}
\definecolor{darkblue}{RGB}{0,0,139}
\begin{document}

\begin{frontmatter}

\title{Hidden Regular Variation under Full and Strong Asymptotic
  Dependence}
\runtitle{Strong Asymptotic Dependence}
\author{\fnms{Bikramjit} \snm{Das}}
\address{Engineering Systems and Design,\\ 
Singapore University of Technology and Design,\\
Singapore 487372 \\ {\sc{Email:}} \tt{bikram@sutd.edu.sg} }
\affiliation{Singapore University of Technology and Design}
\and
\author{\fnms{Sidney I.} \snm{Resnick}}
\address{School of ORIE,\\ Cornell University,\\
Ithaca, NY 14853 USA\\{\sc{Email:}} \tt{sir1@cornell.edu} }
\affiliation{Cornell University}

\runauthor{Das and Resnick}

\begin{abstract}
Data exhibiting heavy-tails in one or more dimensions is often
  studied using the {framework} of regular variation. In a multivariate
  setting this {requires identifying} specific forms of dependence
  in the data; this means {identifying that the} data tends to
  concentrate along particular 
  directions and does not cover the full space. This is observed in
  various data sets {from}
 finance, insurance, network traffic, social networks, etc. In
  this paper we discuss the notions of full and
strong asymptotic dependence for bivariate data {along with the idea
  of hidden regular variation in these cases}. 
{In a risk analysis setting, this leads to improved risk estimation
  accuracy  when regular methods provide a zero estimate of risk}. Analyses of 
both real and simulated data sets  illustrate concepts {of generation and detection of such models}.\end{abstract}

\begin{keyword}[class=MSC]
\kwd{28A33}
\kwd{60G70}
\kwd{62G05}
\kwd{62G32}
\end{keyword}

\begin{keyword}
\kwd{regular variation}
\kwd{multivariate heavy tails}
\kwd{hidden regular variation}
\kwd{tail estimation}
\kwd{strong dependence}
\end{keyword}

\end{frontmatter}

\section{Introduction}

Data that may be modeled by distributions having  heavy tails appear  in many contexts, for example,
 hydrology (\cite{anderson:meerschaert:1998}), finance
  (\cite{smith:2003}), insurance
  (\cite{embrechts:kluppelberg:mikosch:1997}), Internet traffic
  (\cite{crovella:bestavros:taqqu:1998}), social networks and random
  graphs (\cite{durrett:2010, bollobas:borgs:chayes:riordan:2003,
    resnick:samorodnitsky:towsley:davis:willis:wan:2016,
    resnick:samorodnitsky:2015})
  and risk
management (\cite{das:embrechts:fasen:2013,ibragimov:jaffee:walden:2011}).
  Empirical evidence often indicates heavy-tailed marginal 
distributions and the
dependence structure between the various
components must be discerned. We focus here on  the case where
components are strongly dependent.

The purpose of this paper is twofold. First, the paper encourages
 a definition of strong asymptotic {or extremal}
 dependence that means  the limit measure of regular variation
 concentrates on a cone smaller than the full state space. 
Thus,
 directions where multivariate data from such a model {are} found fall in a
 restricted set. Secondly, the paper shows that strong asymptotic dependence 
 is a tractable case for 
the applicability of {\it hidden regular variation\/}.

Hidden regular variation (HRV)
\cite{resnick:2002a,resnick:2008,resnickbook:2007,das:mitra:resnick:2013,
  das:resnick:2015, lindskog:resnick:roy:2014} is often considered for 
 multivariate data exhibiting heavy tails  when {\it
  asymptotic independence\/} is present. Asymptotic
  independence in a bivariate data set of positive values implies that
  both coordinates cannot be large 
  simultaneously and therefore the multivariate regular variation (MRV) limit measure
  concentrates on the co-ordinate axes. To improve risk estimation,
  one then seeks HRV on the 
  non-negative orthant after removing the two axes.

However, hidden regular variation
is applicable whenever the limit measure of regular variation
in standard scale concentrates on a cone which is smaller than the
entire state space, and {is} not restricted to the case of
asymptotic independence; see \cite{das:mitra:resnick:2013, 
  lindskog:resnick:roy:2014} for details. If the limit measure
of regular variation concentrates on a relatively small cone, a risk
calculation of a region in the complement of the support of the limit
measure will yield an answer of {zero} and HRV has the potential to
produce positive estimates of such risks.

We distinguish two related cases:
\begin{enumerate}
\item \emph{Full asymptotic dependence:}  the MRV limit measure  concentrates
  in standard scale on a single diagonal ray.
\item {\emph{Strong asymptotic dependence:}  the MRV} limit measure in standard
  scale concentrates on a relatively small cone about the
  diagonal. This case is illustrated by analyzing out- and in-degree
  for Facebook wall posts 
  and returns of 
 Chevron vs
  Exxon. The variables 
in these examples are
 highly dependent, but they  are not fully asymptotically 
  dependent. In our experience, it is much easier to find examples of strong asymptotic
  dependence compared with full asymptotic dependence.
\end{enumerate}

We {review and adapt}    general  model
 generation
and detection techniques based on the generalized 
polar coordinate transform; see \cite[p. 198]{resnickbook:2007}, \cite{das:mitra:resnick:2013,
  das:resnick:2015, lindskog:resnick:roy:2014}).
The model generation {methods} produce tractable models and the
methods are
illustrated
 in Sections \ref{ex:sim1} and \ref{ex:sim2}. 
The detection methods show when regularly varying models are
consistent with data.
We apply the detection methods to the data examples of strong asymptotic dependence.

The mathematical framework for the study of multivariate heavy tails
is regular variation of measures.
The theory is flexible when given for closed subcones of metric spaces
\cite{lindskog:resnick:roy:2014};   we specialize to subcones of 
 $\mathbb{R}_+^2$ and $\mathbb{R}^{2}$ where statistical results are most readily
exhibited. Statistical extensions to higher dimensions are possible
{and require more sophisticated graphics.}
We list needed
notation  in Section \ref{subsec:notation} for reference. The
definitions of multivariate regular variation (MRV) and hidden regular
variation (HRV) are reviewed in Section \ref{subsec:regVarMod} where general concepts
  are adapted for subcones in two
dimensions.  Sections
  \ref{subsec:polar} and \ref{subsec:MRVHRV} give
equivalent formulations in polar co-ordinates and discuss the
particular cases of strong {and full} dependence.

 Section \ref{subsec:Hillish} gives techniques for
detecting when data is consistent with a model exhibiting MRV and
HRV. These techniques rely on the fact that under 
broad conditions, if  a vector $\bX$ has a  multivariate
 regularly varying distribution on a cone $\mathbb{C}$, then under a
{\it generalized polar coordinate transformation\/} (see
\eqref{eq:defgpolar}), the transformed 
vector satisfies a conditional extreme value (CEV) model for which 
detection techniques exist from \cite{das:resnick:2011b}.
{This methodology adds to the toolbox of }
  one dimensional techniques such as checking if one
  dimensional marginal distributions are heavy tailed
  or checking whether one dimensional functions of the data vector such as the maximum and the  minimum
component are heavy tailed.  See \cite[p. 326]{resnickbook:2007}, \cite{resnick:2002a}.
In Section \ref{sec:dataAnal}, we analyze real and simulated data and
show that our estimation and detection techniques {produce results
consistent with presence of  both MRV with strong dependence and HRV.}
   Section \ref{sec:conc} presents  concluding
  comments.

\section{{Background on Regular Variation of Measures}}\label{sec:basics}
We provide a brief review of the mathematical setup for multivariate regularly
varying measures with the notion of $\M$-convergence. More detail is found in \cite{hult:lindskog:2006a,
hult:lindskog:2006b,das:mitra:resnick:2013,das:resnick:2015,lindskog:resnick:roy:2014}. The
notions of hidden regular variation (HRV) and regular variation
expressed by polar coordinate transforms are discussed in Sections
\ref{subsec:regVarMod} and \ref{subsec:polar} with  emphasis on cases of
the \emph{strong asymptotic dependence}. Finally in Section
\ref{subsec:Hillish}  we discuss detection
of HRV using the Hillish estimator.
\subsection{Basic notation.}\label{subsec:notation}
A summary of some notation and concepts are provided here. {For this paper, we restrict to dimension $d=2$ unless otherwise specified.}  We use bold
  letters to denote vectors, with capital letters for random vectors
  and small letters for non-random vectors, e.g.,
  $\by=(y_1,y_2)\in \R^2$. We also define
  $\bzero=(0,0)$ and $\binfty=(\infty,\infty)$. Vector operations are always understood component-wise,
  e.g., for vectors $\bx$ and $\by$, $\bx\le \by$ means $x_i\le y_i$
  for $i=1, 2$. Some additional notation follows with
  explanations that  are amplified in subsequent sections. 
Detailed discussions are in the references.
$$ 
\begin{array}{llll}
\RV_\beta & \text{Regularly varying functions with index $\beta>0$; that
           is, functions $f:\mathbb{R}_+\mapsto \mathbb{R}_+$}\\
{}& \text{satisfying $\lim_{t\to\infty}f(tx)/f(t)=x^\beta,$ for $x>0.$  We
  can and do assume such functions }\\
&\text{are continuous and  strictly increasing. See \cite{bingham:goldie:teugels:1989, resnickbook:2008, dehaan:ferreira:2006}.}\\[2mm]
\E & \mathbb{R}_+^2 \setminus \{\bzero\} \text{ or }\mathbb{R}^2 \setminus \{\bzero\}.\\[2mm]
[\diag] & \{(x,x):x\geq 0\}.\\[2mm]
[\smallwedge] & {\{\bx\in\R_+^2: a_lx_1 \leq x_2 \leq a_u x_1\} }\;\;\text{for
                some $0<a_l<a_u<\infty$.}\\[2mm]

\nu_\alpha (\cdot) & \text{The Pareto measure on $(0,\infty)$ given by $\nu_\alpha(x,\infty)=x^{-\alpha},x>0$.}\\[2mm]
\MRV & \text{Multivariate regular variation; for this paper, it means regular variation
  on $\E$}.\\[2mm]
\HRV & \text{Hidden regular variation; for this paper, it means a
       second regular variation after}\\
{}& \text{removal of a cone as well as $\bzero$.}\\[2mm]

\M(\mathbb{C}\setminus \mathbb{C}_0) & \text{The set of all non-zero
measures on $\mathbb{C}\setminus \mathbb{C}_0$ which are finite on
                                       subsets bounded}\\
&\text{away from the \emph{forbidden zone} $\mathbb{C}_0$, {a closed cone
  removed from the state space.}}\\[2mm]
\mathcal{C}(\mathbb{C}\setminus \mathbb{C}_0) & \text{Continuous, bounded,
  positive functions on $\mathbb{C}\setminus \mathbb{C}_0$ whose
  supports are bounded}\\&\text{away from the \emph{forbidden zone}
  $\mathbb{C}_0$. Without 
  loss of generality (\cite{lindskog:resnick:roy:2014}), we
  may assume}\\&\text{the  functions are
  uniformly continuous.}\\[2mm] 
\mu_n\to \mu & \text{Convergence in $\M(\mathbb{C}\setminus
  \mathbb{C}_0)$ means $\mu_n (f) \to \mu(f)$ for all } f \in \mathcal{C}(\mathbb{C}\setminus \mathbb{C}_0). \text{ See
   \cite{hult:lindskog:2006a, das:mitra:resnick:2013,
  lindskog:resnick:roy:2014}}\\
{}& \text{and Definition \ref{dfn:mconv}.}\\[2mm]
d(\bx,\by) & \text{Metric in $\R^2$, usually the $L_2$
             distance  $d(\bx,\by)=\bigl((x_1-y_1)^2+(x_2-y_2)^2\bigr)^{1/2}.$}\\[2mm] 
\text{diamond plot} & \text{Mapping of thresholded data onto the
                      $L_1$ unit sphere }
\bx \mapsto \bigl(\frac{x_1}{|x_1|+|x_2|},\frac{x_2}{|x_1|+|x_2|}\bigr)  \\[2mm]
d(\bx,\mathbb{C}) & \inf\limits_{\by\in\mathbb{C}} d(\bx,\by) \text{ for } \bx \in \E \text{ and } \bC\subset \E.\\[2.5mm]
\aleph_{\mathbb{C}} & \{\bx : d(\bx,\mathbb{C} )=1\}. \text{ For
                      instance: }
\aleph_{\bzero} = \{\bx \in \E : d(\bx, \{\bzero \})=1\}, \\&{}
\aleph_{[\diag]} = \{\bx \in \E: d(\bx,[\diag])=1\} \text{ and }
\aleph_{[\smallwedge] } = \{\bx \in \E: d(\bx,[\smallwedge])=1\}. \\[2mm]
   \end{array}$$
$$\begin{array}{llll}  
\gpolar & \text{{Generalized} polar co-ordinate transformation relative to the
  deleted forbidden}\\
&\text{zone $\mathbb{C}_0$.} \quad\gpolar(\bx)=\left(d(\bx, \mathbb{C}_0),
\bx/d(\bx,\mathbb{C}_0)\right). \text{ See
  \cite{lindskog:resnick:roy:2014, das:mitra:resnick:2013}.}\\[2mm]
\bX \independent \bY & \text{The random elements $\bX$ and  $\bY$ are
                       independent.}
\end{array}
$$

\subsection{Regularly varying distributions on cones.}\label{subsec:regVarMod}
We review material from \cite{hult:lindskog:2006a, das:mitra:resnick:2013,
  lindskog:resnick:roy:2014} describing 
 MRV and HRV specialized to two dimensions.
The convergence concept used for defining regular variation is
  $\M$-convergence which is slightly different from  vague
  convergence  traditionally used. 
Reasons for
  preferring $\M$-convergence are discussed in  \cite{lindskog:resnick:roy:2014, das:mitra:resnick:2013}.

\subsubsection{Forbidden zones.}\label{subsub:forbidden}
Consider $\mathbb{R}_+^2 $ {or $\mathbb{R}^2 $} as a metric space with Euclidean
 metric $d(\bx,\by)$. A subset  $\mathbb{C}$ is a {\emph{cone}} if it is closed under
positive scalar multiplication: if $\bx \in
\mathbb{C}$ then $c\,\bx \in \mathbb{C}$ for $c>0$.
 A framework for discussing regularly varying measures
is $\M$-convergence
(\cite{lindskog:resnick:roy:2014, das:mitra:resnick:2013}) on a closed
cone $\mathbb{C}\subset \mathbb{R}_+^2 $ {or $\R^{2}$} with a closed cone 
$\mathbb{C}_0 \subset  \mathbb{C}$ deleted.
Call the deleted cone 
$\mathbb{C}_0 $ the {\it forbidden zone\/}.

{Here are some cases of interest for this paper; see  Figure
  \ref{fig:diag_wedge}.}

\begin{enumerate}
\item Suppose $\mathbb{C} = \mathbb{R}_+^2$
 and $\mathbb{C}_0 =\{\bzero\}$. Then $\E:=\mathbb{C} \setminus
 \mathbb{C}_0
= \mathbb{R}_+^2 \setminus \{\bzero\}$ is the space for defining
$\M$-convergence appropriate for regular variation of distributions of
positive random vectors. The forbidden zone is the origin
$\{\bzero\}.$

\item  Suppose $\mathbb{C} = \mathbb{R}^2$
 and $\mathbb{C}_0 =\{\bzero\}$. Then $\E:=\mathbb{C} \setminus
 \mathbb{C}_0
= \mathbb{R}^2 \setminus \{\bzero\}$ is the space 
 appropriate for regular variation of distributions of pairs of real valued random
 variables such as those representing financial returns.
The forbidden zone is still the origin
$\{\bzero\}.$

\item Suppose $\mathbb{C} = \mathbb{R}_+^2$
 and $\mathbb{C}_0 =\{(x,x):x\geq 0\}=:[\diag]$. Then
 $\mathbb{C} \setminus 
 \mathbb{C}_0= \R_+^2 \setminus [\diag]$, 
the first quadrant without its diagonal,  is the right space for
 defining $\M$-convergence appropriate for HRV when asymptotic full
 dependence is present. The forbidden zone  is  the diagonal; see Figure \ref{fig:diag_wedge}. 

\item A related example is $\mathbb{C}=\R^{2}$ and $\mathbb{C}_0 =\{(x,x):x\in \R\}$ and we seek regular variation on $\mathbb{C} \setminus 
 \mathbb{C}_0= \R^{2} \setminus \{(x,x):x\in \R\}$. 

\item Suppose $\mathbb{C} = \mathbb{R}_+^2$ and for $0<\theta_l<\theta_u<1$,
  and 
\begin{equation}\label{e:parCone}
0<a_l=\theta_u^{-1}-1 <a_u=\theta_l^{-1}-1 <\infty,
\end{equation}
\begin{align}
\mathbb{C}_0 =&\{\bx\in
  \R_+^2: 0<\theta_l \leq \frac{x_1}{x_1+x_2} \leq \theta_u<1\}
\label{eq:polar} \\
=& \{ \bx \in \R_+^2: a_lx_1 \leq x_2 \leq a_u x_1\}=:[\smallwedge],\label{eq:peewee}
\end{align}
where $[\smallwedge]$ 
is a pizza slice removed from the first quadrant.
 We then seek hidden regular variation  on $\R_+^2\setminus
[\smallwedge] $; see Figure \ref{fig:diag_wedge}. 
 When $a_l=a_u=1$ (or equivalently
$\theta_l=\theta_u=1/2$), then $[\smallwedge]$ reduces to
$[\diag]$. 
Note we {may} parameterize $[\smallwedge ]$ in two ways, {one using
the slopes $a_l,a_u$ and one using angles $\theta_l,\theta_u$}. In
\eqref{eq:polar} we use the traditional $L_1$ polar coordinate
transform $\polar $ from $\R^2\setminus \{\bzero\}\mapsto (0,\infty)\times
[-1,1]$ given by
$$\polar: \bx \mapsto \Bigl(|x_1|+|x_2|, \frac{x_1 }{|x_1|+|x_2 |}\Bigr)=(r,\theta).$$
to express $[\smallwedge]$ in polar coordinates as
$\R_{+}\times[\theta_l,\theta_u]$.  In \eqref{eq:peewee} we 
use the slopes $a_l,a_u$ of the boundary lines of $[\smallwedge]$. \
{In
practice we try to infer $[\smallwedge]$ by making a diamond plot of 
the data using $L_1$ norm thresholding.}
\end{enumerate}

{In this paper we give particular attention to $[\smallwedge ]$
because
\begin{itemize}
\item when the limit measure of regular variation concentrates on
  $[\smallwedge ]\subsetneq \R_+^2$ we have a tractable notion of {\it strong
    asymptotic dependence\/}; and
\item data examples of strong asymptotic dependence seem to be far more common
  than for the case of  full asymptotic dependence.
\end{itemize} 
Of course,
other types of forbidden zones are possible and to date most attention
has been directed to removing axes when asymptotic independence is present.}

\subsubsection{Regular variation of measures.}
Let $\M(\mathbb{C}
\setminus \mathbb{C}_0)$ be the set of Borel measures on $\mathbb{C}
\setminus \mathbb{C}_0$ which are finite on sets bounded away from the
forbidden zone 
$\mathbb{C}_0$ (\cite{das:mitra:resnick:2013,
  hult:lindskog:2006a,lindskog:resnick:roy:2014}).  We think of sets bounded
  away from the forbidden zone $\mathbb{C}_0$ as {\it tail regions\/}.
$\M$-convergence  is the basis for the
 definition of multivariate regular variation:
  \begin{dfn}\label{dfn:mconv}
  For $\mu_n, \mu \in \M(\mathbb{C} \setminus \mathbb{C}_0)$ we say
  $\mu_n \to \mu$ in $\M(\mathbb{C} \setminus \mathbb{C}_0)$   if
  $\int f\mathrm d \mu_n \to \int f \mathrm d \mu$ for all  $f\in \mathcal{C}( \mathbb{C} \setminus
  \mathbb{C}_0)$.
  \end{dfn} 
\begin{dfn}\label{dfn:regvarwithmconv}
A random vector $\bZ\geq \bzero$ is regularly varying on $\mathbb{C}
\setminus \mathbb{C}_0$ with index $\alpha>0$ if there exists 
$b(t) \in \RV_{1/\alpha}$, called the {\it scaling
  function\/}, and a measure $\nu(\cdot) 
\in \M(\mathbb{C} 
\setminus \mathbb{C}_0)$, called the {\it limit or tail measure\/},
{such that} as $t \to\infty$,
\begin{equation}\label{eq:RegVarMeas}
t\,\Prob[ \bZ/b(t) \in \cdot \,] \to \nu(\cdot), \quad \text{ in } \M(\mathbb{C}
\setminus \mathbb{C}_0).
\end{equation}
\end{dfn}
We write $\bZ \in \MRV (\alpha, b(t), \nu, \mathbb{C}
\setminus \mathbb{C}_0)$ to emphasize that regular variation depends
on an 
index $\alpha$, scaling function $b \in \RV_{1/\alpha}$, limit measure $\nu$, and state
space $ \mathbb{C}
\setminus \mathbb{C}_0.$
Since  $b(t) \in \RV_{1/\alpha}$,
the limit measure $\nu(\cdot) $ has a scaling property,
\begin{equation}\label{eq:limMeasScales}
\nu(c \,\cdot) =c^{-\alpha} \nu(\cdot),\qquad c>0.
\end{equation}

Suppose $\mathbb{C}=\mathbb{R}_+^2$, $\mathbb{C}_0=\{\bzero\}$. We distinguish between different forms of dependence and identify them as follows:
\begin{enumerate}
 
\item If  $\nu(\cdot)$ satisfies 
$\nu((0,\infty)^2)=0$ so that
$\nu$ concentrates on the axes, then $\bZ$ possesses {\it asymptotic
  independence\/}; see \cite{resnickbook:2007, dehaan:ferreira:2006,
  resnickbook:2008}.
  \item\label{item:2}  If $\nu(\cdot)$ concentrates on $[\diag]$ then
$\bZ$ has {\it  full asymptotic dependence\/}. 
\item\label{item:3} If $\nu(\cdot)$ concentrates on
a narrow wedge as in \eqref{eq:peewee}, then $\bZ$ has {\it strong asymptotic
 dependence\/}.
\end{enumerate}

An analogous classification can be made for the case
  $\mathbb{C}=\mathbb{R}^2.$ 

{\bf Diamond plot:} 
When { doing empirical analyses for cases \ref{item:2} or \ref{item:3}, it
  is convenient and  informative to map points 
$$ \bx \mapsto
\Bigl(\frac{x_1}{|x_1|+|x_2},\frac{x_2}{|x_1|+|x_2|}\Bigr)
=\btheta =(\theta_1,\theta_2)$$
onto the $L_1$ unit sphere or diamond, perhaps after thresholding data
according to {the} $L_1$ norm. We call the resulting plot the {\it diamond
  plot\/}. Observing how points cluster on the $L_1$ unit sphere provides
a visualization of dependence.}

\subsection{Regular variation and the polar coordinate
  transformation.}\label{subsec:polar}
When the forbidden zone is the origin, it is useful to rephrase
regular variation of measures using the polar coordinate
transformation. Theoretically, we may choose any norm $\|\cdot\|$ and
the polar coordinate transform 
maps $\bx$ into the unit sphere determined by the chosen norm:
$\bx \mapsto \bigl(\|\bx\|, {\bx}/{\|\bx\|}\bigr)$.
The limit measure expressed in polar coordinates is a product measure
and this provides a way to construct regularly varying measures and is
useful for inference. (See \cite[p. 168 ff, 173 ff]{resnickbook:2007}.)
When the forbidden zone is a more general cone than just the
  origin, the polar coordinate transform no longer brings benefits and
  the limit angular measure expressed in these coordinates may be infinite.
To get a limit measure expressed as a product {where the analogue of
the angular measure is a probability measure,} one may
transform \eqref{eq:RegVarMeas} and  
\eqref{eq:limMeasScales} using {\it generalized\/} polar coordinates
(\cite{lindskog:resnick:roy:2014, das:mitra:resnick:2013}).
 We can define
the generalized polar coordinate transform for general cones of the form $\mathbb{C}\setminus\mathbb{C}_0$
and an {associated metric $d(\cdot,\cdot)$ satisfying
$d(c\bx,c\by)=cd(\bx,\by)$ for scalars $c>0$.  The metric $d(\cdot,
\cdot)$ that we use in
practice is the usual $L_2$ Euclidean metric {but note that  the $L_1$ norm is 
used for} visualizations using the diamond plot.}  

\subsubsection{Generalized polar coordinates.}
 Define $\gpolar: \mathbb{C}\setminus \mathbb{C}_0
\mapsto (0,\infty)\times 
\aleph_{\mathbb{C}_0}$ by
\begin{equation}\label{eq:defgpolar}
\gpolar (\bx) =\left(d(\bx,\mathbb{C}_0) , \frac{\bx}
{d(\bx,\mathbb{C}_0)}\right).
\end{equation}
Consequently,  the inverse
  $\gpolar^{\leftarrow}:(0,\infty)\times \aleph_{{\bC}_0}\mapsto 
  \bC\setminus\bC_{0}$ 
of the $\gpolar$ function is 
\begin{equation}\label{eq:defgpolarinv}
\gpolar^{\leftarrow} (r, \theta) =r\theta.
\end{equation}

The transformation $\gpolar$ depends on the
  forbidden zone $\bC_0$ and {the choice of metric}. 
In practice, the metric $d(\cdot,\cdot)$ is taken to be the usual
$L_2$ Euclidean distance.  This is practical and customary but not obligatory.

\subsubsection{Generalized unit sphere.} 
{When transforming from Cartesian to polar coordinates, a central role
is played by the unit sphere $\aleph_0:=\{\bx \neq \bzero:\|\bx\|=1\}.$ 
The comparable set when using generalized polar coordinates with
respect to the forbidden zone $\mathbb{C}_0$   is }
$\aleph_{\mathbb{C}_0}=\{\bx \in \mathbb{C} \setminus
\mathbb{C}_0: d(\bx, \mathbb{C}_0)=1\},$
 the locus of points at distance 1 from the deleted forbidden zone
$\mathbb{C}_0$.
We then have an equivalent form of   \eqref{eq:RegVarMeas} and
\eqref{eq:limMeasScales},
namely,
\begin{equation}\label{eq:limMeasPolar}
t\Prob \Bigl[\gpolar \left(\frac{\bZ}{b(t)}\right) \in \;\cdot
  \;\Bigr] =
t\Prob \Bigl[ \Bigl( \frac{d(\bZ, \bC_{0})}{b(t)},
\frac{\bZ}{d(\bZ,\bC_0)}\Bigr) \in \cdot \,\Bigr]
\to (\nu_{\alpha}  \times
S_0)(\cdot) \,  = (\nu\circ\gpolar^{\leftarrow})(\cdot)\,,
\end{equation}
in $\M\left((0,\infty) \times \aleph_{\mathbb{C}_0} \right)$
where $\nu_{\alpha} (x, \infty) =x^{-\alpha}, \,x>0,\,\alpha>0$
and $S_0(\cdot) $ is a probability measure on $\aleph_{\mathbb{C}_0}
$  (\cite{das:mitra:resnick:2013, lindskog:resnick:roy:2014}), {provided
$b(t)$ is appropriately chosen}. 
{Note that $\aleph_{\mathbb{C}_0}$ depends on the choice of
  $d(\cdot,\cdot)$, and 
the limit in \eqref{eq:limMeasPolar}
is a product measure.}

\subsubsection{Examples of unit spheres.}
Figure \ref{fig:diag_wedge} shows different shapes of
$\aleph_{{\bC_{0}}}$ {for $L_2$ distance} and different choices of $\bC_{{0}}$ where
$\bC= \R_{+}^{2}$.

  \begin{figure}[t]
\centering
\includegraphics[width=6in]{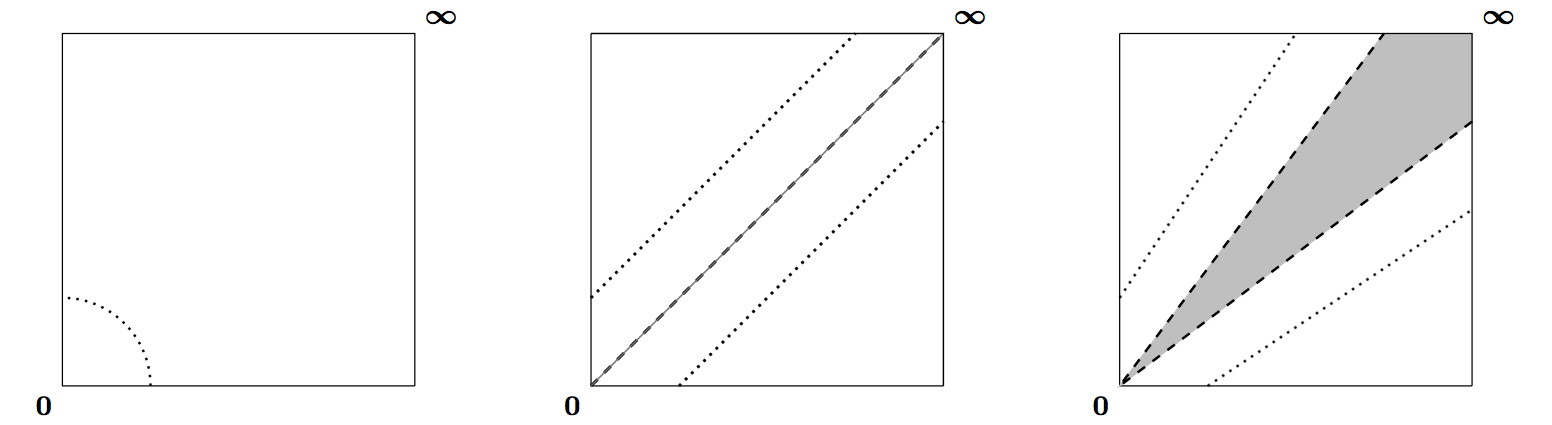}
\vspace{-10pt}
\caption{Left: $\R_{+}^2 \setminus \{\bzero\}$ and distance is $L_2$. Center:  $\R_{+}^2
  \setminus [{\diag}]$ and distance is $L_2$.  Right:  $\R_{+}^2 \setminus
  [\smallwedge]$ and distance is $L_2$. The dotted lines represent $\aleph_{\mathbb{C}_0}$
  which is the locus of points at distance one from $\bC_0$.} 
\label{fig:diag_wedge}
\end{figure}

  \begin{enumerate}[(i)]
\item For $\mathbb{R}_{{+}}^2 \setminus \{\bzero\}$, where the forbidden
  zone is $\{\bzero\}$, we have 
$\aleph_{\bzero}=\{\bx \in \R_{{+}}^2: d(\bx, \{\bzero\})=1\}$.
\item 
If we delete the forbidden zone $\mathbb{C}_0=[\diag]=\{(x,x):x \in \mathbb{R}_{{+}}\}$
from $\R_{{+}}^2$, the  appropriate unit sphere with respect to
$L_2$ distance is 
\begin{equation}\label{eq:alephdiag}
\aleph_{[\diag]}:=\aleph_{>[\diag]} \cup \aleph_{<[\diag]} 
=\{(u,u+\sqrt 2):{u \geq 0} \} \bigcup \{(u,u-\sqrt 2):{u \geq \sqrt 2}\} ,
\end{equation}
the lines of slope 1, above and below the diagonal, which are at
distance 1 from the diagonal. For $ \bx \notin [\diag]$,
\begin{equation}\label{eq:dist(diag)}
d\bigl(\bx , [\diag]\bigr)=|x_1-x_2|/\sqrt{2}.
\end{equation}

\item When the forbidden zone is $[\smallwedge]\subset \R_+^2$, we have
\begin{align}
\aleph_{[\smallwedge]}=&\aleph_{>[\smallwedge]}\cup
\aleph_{<[\smallwedge]}\nonumber \\
=&\{(u,a_uu+\sqrt{1+a_u^2}):u\geq 0\}
\bigcup \{(u,a_lu-\sqrt{1+a_l^2}):u\geq
   a_l^{-1}\sqrt{1+a_l^2}\},\label{eq:alephsmallwedge} 
\end{align}
which are the lines parallel to the two rays defining
$[\smallwedge]$ {at a distance of 1 from $[\smallwedge]$}. When $a_l=a_u=1$, $\aleph_{[\smallwedge]}$ reduces to $\aleph_{[\diag]}$.
For the distance to the forbidden zone from a point $\bx \notin
[\smallwedge]$,  we have 
\begin{align}
d\bigl(\bx, [\smallwedge]\bigr)=& \frac{|x_2-a_ux_1|}{\sqrt{1+a_u^2}}, 
&& \text{ if } x_2>a_ux_1,\label{eq:dist(>wedge)}\\
d\bigl(\bx, [\smallwedge]\bigr)=& \frac{|x_2-a_lx_1|}{\sqrt{1+a_l^2}}, 
&& \text{ if }x_2<a_lx_1,\label{eq:dist(<wedge)}
\end{align}
which reduces to \eqref{eq:dist(diag)} if $a_l=a_u=1$. 
\end{enumerate}

Obvious
  changes apply when the state space is $\R^2$.

\section{MRV and HRV under strong asymptotic dependence.} \label{subsec:MRVHRV}
\subsection{Definitions.}\label{subsub:gen}
 {Consider simultaneous existence of
  regular variation on both
 the big cone $\R_+^2\setminus \{\bzero\}$ and a smaller cone
 $\R_+^2\setminus \mathbb{C}_0$, where $\mathbb{C}_0$ is either
 $[\diag]$ or $[\smallwedge]$.
We  provide equivalent
polar-coordinate  conditions for this simultaneous 
existence. 

\begin{dfn}\label{dfn:MRVHRV}
The vector $
\bZ$ is regularly varying on $
\R_+^2\setminus \{\bzero\}$ and has {\it hidden regular variation\/}  on $\R_+^2\setminus \mathbb{C}_0$ if there exist
$0<\alpha \leq \alpha_0$, scaling functions $b(t) \in
\RV_{1/\alpha}$ and $b_0(t)  \in \RV_{1/\alpha_0}$ with $b(t)/b_0(t) \to
\infty$ and limit measures $\nu,\, \nu_0$ such that 
\begin{equation}\label{eq:mrv+hrv}
\bZ\in
\MRV(\alpha, b(t), \nu, \R_+^2\setminus \{\bzero\}) \cap
\MRV(\alpha_0, b_0(t), \nu_0, \R_+^2\setminus \mathbb{C}_0).
\end{equation}
\end{dfn}
Unpacking the notation we obtain the two regular variation limits
\begin{align}
t\Prob[\bZ / b(t) \in  \cdot \,]\to & \,\nu (\cdot) &&\quad \text{ in
}\M(\R_+^2\setminus \{\bzero\}),
\label{eq:regVarE}\\
t\Prob[\bZ / b_0(t) \in
 \cdot \,]\to & \,\nu_0 (\cdot) &&\quad \text{ in
} \M(\R_+^2\setminus \mathbb{C}_0).\label{eq:regVarE0}
\intertext{
Using
polar coordinates, \eqref{eq:regVarE} can be written as}
t\Prob\left[\left(\|\bZ \|/ b(t), \bZ/\|\bZ\|\right) \in \cdot
  \,\right]\to & \,
\nu_\alpha  \times S (\cdot) &&\quad \text{ in } \M((0,\infty)\times
\aleph_{\bzero  }), \label{eq:regVarEPolar}
\end{align}
where $S$ is a probability measure on $\aleph_{\bzero}$. 
{Similarly when removing $\mathbb{C}_0$ from the state space, 
generalized polar coordinates allow re-writing}
\eqref{eq:regVarE0}  as
\begin{equation}\label{e:delC0}
t\Prob \Bigl[ \Bigl( \frac{d(\bZ, \bC_{0})}{b_0(t)},
\frac{\bZ}{d(\bZ,\bC_0)}\Bigr) \in \cdot \,\Bigr]
\to (\nu_{\alpha_{0}}  \times
S_0)(\cdot) 
\end{equation}
in $\M\left((0,\infty) \times \aleph_{\mathbb{C}_0} \right)$
where $\nu_{\alpha_{0}} (x, \infty) =x^{-\alpha_{0}}, \,x>0,\,\alpha_{0}>0$
and $S_0(\cdot) $ is a probability measure on $\aleph_{\mathbb{C}_0}$.

\subsection{Regular variation when deleting $[\smallwedge]$.}
{Focus on the special case where the forbidden zone is
$[\smallwedge]$. 
Since $[\diag ]$ is a particular case of
$[\smallwedge]$, we do not treat $[\diag]$ separately.
Recall the notation in \eqref{eq:alephsmallwedge} and the two
parameterizations of $[\smallwedge ]$ given in \eqref{e:parCone},
\eqref{eq:polar} and \eqref{eq:peewee}. {The} distance of points to
$[\smallwedge]$ is given in \eqref{eq:dist(>wedge)} and \eqref{eq:dist(<wedge)}.}
When  $\mathbb{C}_0=[\smallwedge]$,
\eqref{e:delC0}  becomes two statements. With $x>0$ and
$\Lambda \subset
  \aleph_{[\smallwedge]}$ we have
\begin{align}
t\Prob \left[ \frac{Z_2-a_uZ_1}{{b_{0}(t)}\sqrt{1+a_u^2}}>x,
                                     \frac{\sqrt{1+a_u^2}\bZ}{Z_2-a_uZ_1}
                                     \in \Lambda \right]
\to & x^{-\alpha_0} S_0(\Lambda) &&\quad \text{ in }
                                   \M\left((0,\infty)\times
                                   \aleph_{>[\smallwedge]}\right), \label{eq:wedge1st}\\
t\Prob \left[ \frac{a_lZ_1-Z_2}{{b_{0}(t)}\sqrt{1+a_l^2}}>x,
                                     \frac{\sqrt{1+a_l^2}\bZ}{a_lZ_1-Z_2}
                                     \in \Lambda \right]
\to & x^{-\alpha_0} S_0(\Lambda) &&\quad \text{ in }
                                   \M\left((0,\infty)\times
                                   \aleph_{<[\smallwedge]}\right) \label{eq:wedge2nd}.
\end{align}
Modifying variables leads to the two simpler statements. For $x>0$,
\begin{align}
t\Prob \left[
\frac{Z_2-a_uZ_1}{b_0(t)}
>x, \frac{Z_2}{Z_1} \leq y\right] \to & (1+a_u^{2})^{\frac{\alpha_0}{2}} x^{-\alpha_0}
                                        S_0 \Biggl\{\Bigl(v,a_uv+\sqrt{1+a_u^2}\Bigr):
                                        v\geq
                                        \frac{\sqrt{1+a_u^2}}{y-a_u}\Biggr\},\;
                                                                        y>a_u,
\label{eq:>smallwedge}\\
t\Prob \left[
\frac{a_lZ_1-Z_2}{b_0(t)}
>x, \frac{Z_1}{Z_2} \leq y\right] \to & (1+a_l^{2})^{\frac{\alpha_0}{2}} x^{-\alpha_0}
                                        S_0\Bigl\{ \bigl( v+\sqrt{1+a_l^2}/a_l,v\bigr):
                                        v\geq
                                        \frac{\sqrt{1+a_l^2}}{a_ly-1}\Bigr\},\;
                                                                        y>\frac{1}{a_l}.
\label{eq:<smallwedge}
\end{align}


\begin{rem}\label{rem:genius}
{\rm Thus a necessary condition for non-trivial regular variation on $\R_+^2\setminus
[\smallwedge]$ is that both $(Z_2-a_uZ_1)_+$ and $(a_lZ_1-Z_2)_+$ be
regularly varying with index $\alpha_0 \geq \alpha$. This fact
suggests the exploratory diagnostic of testing whether these 1
dimensional variables have power laws with the same index. If so, one
can continue to explore with the Hillish statistic and associated
plot; see Section \ref{subsec:Hillish}.

However, there is nothing to prevent the possibility that regular variation
exists on the region above $[\smallwedge]$ but that tails are of lower
order  below $[\smallwedge]$. This would happen for instance if
 $S_{0}(\aleph_{<[\smallwedge]}) =0$ but
 $S_{0}(\aleph_{>[\smallwedge]})>0$.  If this happened, one could
 search for another higher index or thinner tailed 
 regular variation on $<[\smallwedge].$  }
\end{rem}

\begin{rem}\label{rem:altgenius}
{{\rm Analogous statements to 
the limits \eqref{eq:>smallwedge}, \eqref{eq:<smallwedge}
hold true
when $\bC=\R^{2}$ and $[\diag]$ and $[\smallwedge]$ are their appropriate equivalents in $\R^{2}$ to get.}
}\end{rem}

{
\subsubsection{Restrictions on the choice of $[\smallwedge]$.}
 In this paper, we assume that $1< a_{l}\le 1\le a_{u}<\infty$ where $[\smallwedge] = \{\bx\in \R_{+}^{2}: a_{l}x_{1}\le x_{2}\le a_{l}x_{1}\}$. This choice is partly governed by the fact that it is easier for us to deal with data portraying tail equivalence with  $ \lim_{t\to\infty} {\PR(Z_{1}>t)}/{\PR(Z_{2}>t)}=1.$ Now, when $a_{l}=a_{u}$, we get $[\smallwedge]=[\diag]$, which means
 under a model of full asymptotic dependence the only limit measure that we allow is restricted to $[\diag]$.  If we assume
$ \bZ\in \MRV(\alpha, b(t), \nu, \R_+^2\setminus \{\bzero\}) $ and $\nu$ is supported on $[\diag]$
then using \eqref{eq:regVarE}, this clearly implies, that
 \begin{align*}
 \lim_{t\to\infty} \frac{\PR(Z_{1}>t)}{\PR(Z_{2}>t)} & = \lim_{t\to\infty} \frac{\PR(Z_{1}>b(t))}{\PR(Z_{2}>b(t))}\\
     & = \lim_{t\to\infty} \frac{t\,\PR\left(\bZ/b(t) \in (1,\infty)\times[0,\infty)\right)}{t\,\PR\left(\bZ/b(t) \in [0,\infty)\times(1,\infty)\right)}\\
   & = \frac{\nu\left( (1,\infty)\times[0,\infty)\right)}{\nu\left( [0,\infty)\times(1,\infty)\right)}\\
   & =  \frac{\nu\left( (x_{1},x_{2}): x_{1}>1, (x_{1},x_{2})\in [\diag]\right)}{\nu\left( (x_{1},x_{2}): x_{2}>1, (x_{1},x_{2})\in [\diag]\right)} =1,
 \end{align*}
 where the last line is a consequence of $\nu$ being concentrated on $[\diag]$.  Thus,  not only are $Z_{1}, Z_{2}$ tail equivalent, but in fact
 $ \lim_{t\to\infty} {\PR(Z_{1}>t)}/{\PR(Z_{2}>t)}=1.$ 
 
 The following lemma shows that that we cannot choose a $[\smallwedge]$
 which does not contain $[\diag]$ if we want to guarantee  $
 \lim_{t\to\infty} {\PR(Z_{1}>t)}/{\PR(Z_{2}>t)}=1.$  
 
  \begin{lem}
 If $\bZ\in \MRV(\alpha, b(t), \nu, \R_+^2\setminus \{\bzero\})$ where $\nu$ is supported on $[\smallwedge]=\{\bx\in \R_{+}^{2}: a_{l}x_{1}\le x_{2}\le a_{l}x_{1}\}$
 and  
 $ \lim_{t\to\infty} {\PR(Z_{1}>t)}/{\PR(Z_{2}>t)}=1$ then $0<a_{l} \le 1\le a_{u}<\infty$.
 \end{lem}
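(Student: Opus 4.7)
The plan is to run the same diagnostic calculation that appears in the paragraph just above the lemma, but now for the wedge $[\smallwedge]$ instead of $[\diag]$, and to contradict either $a_l>1$ or $a_u<1$ using the scaling relation $\nu(c\,\cdot)=c^{-\alpha}\nu(\cdot)$ together with the containment of the support of $\nu$ in $[\smallwedge]$.

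First I would apply the $\M$-convergence \eqref{eq:regVarE} to the two continuity sets $\{x_1>1\}$ and $\{x_2>1\}$, which gives $t\PR(Z_i>b(t))\to\nu(\{x_i>1\})$ for $i=1,2$. A short verification is needed that both of these limits are strictly positive: because $a_l,a_u\in(0,\infty)$ the intersection $[\smallwedge]\cap\{x_i=0\}$ reduces to $\{\bzero\}$, and the scaling relation applied to a non-zero $\nu$ forces $\nu(\{x_i>c\})>0$ for every $c>0$. Taking the ratio along $b(t)$ and combining with the hypothesis $\PR(Z_1>t)/\PR(Z_2>t)\to 1$ yields
\[
\frac{\nu(\{x_1>1\})}{\nu(\{x_2>1\})}=1.
\]

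Next I would assume for contradiction that $a_l>1$. Since $\nu$ is supported on $[\smallwedge]$, every relevant point satisfies $x_2\ge a_l x_1$, so
\[
\{x_1>1\}\cap[\smallwedge]\subset\{x_2>a_l\}\cap[\smallwedge].
\]
Monotonicity of $\nu$ and the scaling relation then give $\nu(\{x_1>1\})\le\nu(\{x_2>a_l\})=a_l^{-\alpha}\nu(\{x_2>1\})$, which, combined with the equality above, forces $1\le a_l^{-\alpha}$, i.e.\ $a_l\le 1$, contradicting $a_l>1$. The parallel argument using $x_1\ge a_u^{-1}x_2$ on $[\smallwedge]$ gives $\nu(\{x_2>1\})\le a_u^{\alpha}\nu(\{x_1>1\})$ and hence rules out $a_u<1$.

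I expect the only real subtlety to lie in converting the hypothesis on the marginal ratio, stated along all real $t$, into an equation about $\nu$-masses obtained along the subsequence $b(t)$; this is where the strict-positivity observation matters, since otherwise one might be comparing degenerate $0/0$ limits. Once that is in hand, the rest is a one-line inclusion plus the $-\alpha$ scaling, and the two halves $a_l\le 1$ and $1\le a_u$ follow by symmetry of the construction.
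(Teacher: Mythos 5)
Your proof is correct and follows essentially the same route as the paper's: both arguments convert the tail-equivalence hypothesis into the equality $\nu(\{x_1>1\})=\nu(\{x_2>1\})$ and then use the containment of the support of $\nu$ in $[\smallwedge]$ together with the homogeneity $\nu(c\,\cdot)=c^{-\alpha}\nu(\cdot)$ to reach a contradiction. If anything, your version is slightly more careful, since you make explicit the positivity of the masses $\nu(\{x_i>1\})$ and the scaling step on which the paper's strict inequalities implicitly rely.
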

\begin{proof}
{To get a contradiction,} suppose we have  $1<a_{l}\le
a_{u}<\infty.$
{(A similar contradiction is obtained if we assume 
 $0<a_{l}\le a_{u}<1.$) } Assume,
\begin{align*}
1= \ \lim_{t\to\infty} \frac{\PR(Z_{1}>t)}{\PR(Z_{2}>t)} & 
      = \lim_{t\to\infty} \frac{t\,\PR\left(\bZ/b(t) \in (1,\infty)\times[0,\infty)\right)}{t\,\PR\left(\bZ/b(t) \in [0,\infty)\times(1,\infty)\right)}
        = \frac{\nu\left( (1,\infty)\times[0,\infty)\right)}{\nu\left( [0,\infty)\times(1,\infty)\right)}.
\end{align*}
Hence $\nu\left( (1,\infty)\times[0,\infty)\right)=\nu\left( [0,\infty)\times(1,\infty)\right)$. We know that $\nu$ is supported on $[\smallwedge]$.
So,
\begin{align*}
\nu\left( (1,\infty)\times[0,\infty)\right) & = \nu ((x_{1},x_{2})\in
                                              \R_{+}^{2}: x_{1}>1,
                                              (x_{1},x_{2}) \in
                                              [\smallwedge]
                                              )&&{}\\ 
                        & = \nu \left((x_{1},x_{2})\in \R_{+}^{2}:
                          x_{1}>1, a_{l}\le \frac{x_{2}}{x_{1}}\le
                          a_{u} \right) &&\\  
                        & < \nu \left((x_{1},x_{2})\in \R_{+}^{2}:
                          x_{1}>{\frac {1}{a_{l}}}, a_{l}\le
                          \frac{x_{2}}{x_{1}}\le a_{u}
                          \right)&&(\text{since $a_l>1$}) \\
                          & < \nu \left((x_{1},x_{2})\in \R_{+}^{2}:
                            x_{2}>1, a_{l}\le \frac{x_{2}}{x_{1}}\le
                            a_{u} \right)&&\text{(since $x_2\geq a_lx_1$)}\\
                          & =\nu\left( [0,\infty)\times(1,\infty)\right),&&{}
\end{align*}
which is a contradiction.
\end{proof}

 }

\subsection{Regular variation when deleting $[\smallwedge]$
  expressed in traditional polar coordinates.}
Regular variation on $\R_+^2 \setminus [\smallwedge]$
expressed using generalized polar coordinates in
\eqref{eq:>smallwedge},
\eqref{eq:<smallwedge} can also be written in terms of the traditional
polar coordinates $\bz\mapsto \bigl(r,(\theta,1-\theta)\bigr)$ where
$z_1=r\theta$ and $z_2=r(1-\theta)$ and $r=z_1+z_2.$ Using capital
letters for random variables, the left most probability in
\eqref{eq:>smallwedge} becomes for 
$x>0,\,y>a_u$,
\begin{align*}
t\Prob \Bigl[
\frac{Z_2-a_uZ_1}{b_0(t)}
>x, &\frac{Z_2}{Z_1} \leq y \Bigr] =
t\Prob \left[ \frac{R(1-\Theta (1+a_u))}{b_0(t)}>x,
                                     \frac{(1-\Theta)}{\Theta} \leq y
                                     \right] \\
\intertext{and using \eqref{e:parCone} this is}
=&t\Prob \left[ \frac{R(1-\theta_l^{-1}\Theta )}{b_0(t)}>x,
  \Theta^{-1} \leq y+1
                                     \right] =t\Prob \left[
   \frac{R(1-\theta_l^{-1}\Theta )}{b_0(t)}>x, \Theta\ge\frac{1}{1+y}\right]
\end{align*}
Set $s=1/(1+y)$ where $y>a_u$ and thus $s<\theta_l$,
$$
t\Prob \left[
   \frac{R(1-\theta_l^{-1}\Theta )}{b_0(t)}>x, \Theta>s\right]
\to (1+a_u^{2})^{\frac{\alpha_0}{2}} x^{-\alpha_0}
                                        S_0 \Biggl\{\Bigl(v,a_uv+\sqrt{1+a_u^2}\Bigr):
                                        v\geq
                                        \frac{\sqrt{1+a_u^2}}{s^{-1}-\theta_l^{-1}}\Biggr\}.
$$                                                                       
An analogous expression holds for \eqref{eq:<smallwedge}. So if
regular variation with index $\alpha$ holds on $\R_+^2\setminus
\{\bzero\}$, $R$ is
a random variable with regularly varying distribution tail with index
$\alpha$ and  multiplying $R$ by $1-\theta_l^{-1}\Theta$ produces a
variable with a lighter tail having index $\alpha_0$.

\subsection{The forbidden zone of HRV and the limit measure on    {$\E$}.}
    
Suppose there are two regular variation properties that hold for a vector $\bZ\ge 0$
so that \eqref{eq:mrv+hrv} holds with $b(t)/b_0(t)\to\infty$. If 
$\mathbb{C}_0=[\smallwedge]$ with {$0< a_l \le 1 \le a_u <\infty$}; then
$\nu$, the limit measure on $\R_+^2\setminus \{\bzero\}$, must concentrate
on the forbidden zone $[\smallwedge]$ used to define the second
regular variation. (Cf. \cite[p. 324-5]{resnickbook:2007}.)  This
means that when detecting MRV, if the limit measure is typical of
 strong asymptotic dependence and concentrates on $[\smallwedge]$, we
are encouraged to look for additional regular variation regimes on
$\R_+^2\setminus [\smallwedge]$. Hence we get the following result.

\begin{prop}\label{prop:forbid}
Suppose
\begin{equation}\label{e:doublewhammy}
\bZ\in
\MRV(\alpha, b(t), \nu, \R_+^2\setminus \{\bzero\}) \cap
\MRV(\alpha_0, b_0(t), \nu_0, \R_+^2\setminus [\smallwedge])
\end{equation}
with $b(t)/b_0(t)\to \infty$ and $0 {<} a_l\leq 1\leq a_u<\infty.$ Then 
$\nu$, the limit measure on $\R_+^2\setminus \{\bzero\}$, concentrates
on $[\smallwedge]$. 
\end{prop}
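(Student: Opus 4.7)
The plan is to show that $\nu(A)=0$ for every relatively compact Borel set $A\subset \R_+^2\setminus[\smallwedge]$ bounded away from $[\smallwedge]$ and satisfying $\nu(\partial A)=0$, since such sets generate the Borel $\sigma$-algebra on $\R_+^2\setminus[\smallwedge]$. Because $\bzero\in[\smallwedge]$, such an $A$ is automatically bounded away from $\bzero$, so the first regular variation limit in \eqref{e:doublewhammy} applies and yields $t\Prob[\bZ/b(t)\in A]\to \nu(A)$.

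Next, set $c(t)=b(t)/b_0(t)\to\infty$ and rewrite
\begin{equation*}
t\Prob[\bZ/b(t)\in A] \;=\; t\Prob[\bZ/b_0(t)\in c(t)A].
\end{equation*}
Since $[\smallwedge]$ is a cone, $d(c\bx,[\smallwedge])=c\,d(\bx,[\smallwedge])$ for $c>0$, so with $\delta:=\inf_{\bx\in A}d(\bx,[\smallwedge])>0$ we obtain $\inf_{\by\in c(t)A}d(\by,[\smallwedge])\ge c(t)\delta$. Hence, for any fixed $M>0$, eventually $c(t)A\subset W_M:=\{\bx\in\R_+^2\setminus[\smallwedge]:d(\bx,[\smallwedge])>M\}$.

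The final step exploits the second MRV statement together with the scaling identity \eqref{eq:limMeasScales} applied to $\nu_0$. Choosing $M$ to be a continuity value of $r\mapsto \nu_0(\{d(\cdot,[\smallwedge])>r\})$ (countably many exceptions at most), the $\M$-convergence to $\nu_0$ gives
\begin{equation*}
\limsup_{t\to\infty} t\Prob[\bZ/b_0(t)\in W_M] \;=\; \nu_0(W_M) \;=\; M^{-\alpha_0}\nu_0(W_1),
\end{equation*}
and $\nu_0(W_1)<\infty$ because $W_1$ is bounded away from $[\smallwedge]$. Combining with the display above, $\nu(A)\le M^{-\alpha_0}\nu_0(W_1)$ for every such $M$. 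Letting $M\to\infty$ forces $\nu(A)=0$.

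The main obstacle is a bookkeeping one: one must confirm that the sets $c(t)A$ and $W_M$ are legitimate test sets for the respective $\M$-convergences (bounded away from the appropriate forbidden zones) and that continuity conditions for $\nu$ and $\nu_0$ can be arranged by perturbing $M$ and $A$ slightly. These are standard features of the $\M$-convergence theory developed in \cite{hult:lindskog:2006a,das:mitra:resnick:2013,lindskog:resnick:roy:2014}, so the argument above is essentially the complete strategy; the substantive point is the observation that $c(t)\to\infty$ pushes the scaled sets into ever-thinner tails of $\nu_0$, which are of smaller asymptotic order than the tails governed by $\nu$.
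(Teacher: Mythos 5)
Your argument is correct and rests on the same mechanism as the paper's own proof: since $b(t)/b_0(t)\to\infty$, rescaling by $b(t)$ pushes any set bounded away from $[\smallwedge]$ into the far tail of the hidden regular variation, whose limit measure is finite on such sets, so the first-level limit must vanish off $[\smallwedge]$. The paper packages this via the one-dimensional tails $t\Prob[(Z_2-a_uZ_1)/b(t)>\delta]$ and $t\Prob[(a_lZ_1-Z_2)/b(t)>\delta]$ together with \eqref{eq:>smallwedge}--\eqref{eq:<smallwedge}, whereas you work with general $\nu$-continuity sets and the scaling property of $\nu_0$; the two formulations are interchangeable.
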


\begin{proof}
To see
this, consider the region above the ray $y=a_ux$. Then for $\delta>0$,
as $t\to\infty$, referring to \eqref{eq:>smallwedge},
\begin{align*}
t\Prob \Bigl[ \frac{Z_2-a_uZ_1}{b(t)} >\delta \Bigr]
=&t\Prob \Bigl[ \frac{Z_2-a_uZ_1}{b_0(t)} > \frac{b(t)}{b_0(t)}\delta\Bigr]
   \to 0,
\end{align*}
 since $b(t)/b_0(t) \to \infty$ and $S_0(\cdot)$ is a probability
measure. Similarly, for the region below the ray $y=a_lx$,
\begin{align*}
t\Prob \Bigl[ \frac{a_lZ_1 -Z_2}{b(t)} >\delta \Bigr]
=&t\Prob \Bigl[ \frac{a_lZ_1-Z_2}{b_0(t)} > \frac{b(t)}{b_0(t)}\delta \Bigr]
\to 0,
\end{align*}
from \eqref{eq:<smallwedge}. Thus $\nu$ places no mass outside $[\smallwedge]$.
\end{proof}
{Clearly, a result analogous to Proposition \ref{prop:forbid} holds where $\bC=\R^{2}$ and $\bC_{0}$ is the appropriate equivalent of $[\diag]$ and $[\smallwedge]$ on $\R^{2}$.} 

{\subsection{How HRV on $\R^2\setminus [\smallwedge]$ can improve
  risk estimates.}\label{subsec:riskestimate}
Suppose $I_1$ and $I_2$ are financial instruments that have positive
risks $Z_1$ and $Z_2$ per unit of investment where $\bZ=(Z_1,Z_2)$
satisfies \eqref{e:doublewhammy}. Suppose we buy one unit of $I_2$
and sell $2a_l$ units of $I_1$. The risk of this portfolio is
$Z_2-2a_lZ_1$ and we have two asymptotic regimes that can be used to
estimate the probability the risk is large. If we use MRV with scale
function $b(t)$ then for large $x>0$,
\begin{align*}
\Prob [Z_2-2a_u Z_1>x]=&\Prob \Bigl[\Bigl(\frac{Z_1}{b(t)},\frac{Z_2}{b(t)} \Bigr) \in
\{(v,w)\in \R_+^2: w-2a_u v>x/b(t)\} \Bigr]\\
\approx & \frac{1}{t} \nu \{(v,w)\in \R_+^2: w-2a_u v >x/b(t)\} =0\\
\intertext{since the required region is outside the support
  $[\smallwedge]$ of the measure $\nu$. Is the risk really $0$ or did
  we use the wrong asymptotic approximation? If we use HRV with
  scale function $b_0(t)$, then 
we get a non-zero limit:}
\Prob[Z_2-2a_u Z_1>x]=&\Prob \Bigl[\Bigl(\frac{Z_1}{b_0(t)},\frac{Z_2}{b_0(t)} \Bigr) \in
\{(v,w)\in \R_+^2: w-2a_u v>x/b_0(t)\} \Bigr]\\
\approx & \frac{1}{t} \nu_0 \{(v,w)\in \R_+^2: w-2a_u v >x/b_0(t)\}.\\
\intertext{Now switch to generalized polar coordinates with
  $(v,w)=r(\mu_1,\mu_2) $ and $\mu_2=a_u\mu_1$ and the risk
  calculation is with respect to the product measure $\nu_{\alpha_0}
  \times S_0\bigl(d(\mu_1,\mu_2)\bigr)$ and }
\Prob[Z_2-2a_u Z_1>x]\approx &\frac 1t \iint_{\{(r,(\mu_1,\mu_2)):
                        r\mu_2-2a_u r\mu_1>x/b_0(t)\}} \alpha_0
                        r^{-\alpha_0 -1} S_0\bigl(\mathrm d(\mu_1,\mu_2)
                               \bigr)\\
=& \frac 1t \Bigl(\frac{x}{b_0(t)}\Bigr)^{-{\alpha_0}}
   \int_{\{(\mu_1,\mu_2):\mu_2-2a_u\mu_1>0\}} (\mu_2-2a_u\mu_1)^{\alpha_0}
   S_0\bigl(\mathrm d(\mu_1,\mu_2)\bigr).
\end{align*}
Of course, in practice $S_0, \alpha_0, b_0$ must be replaced by
estimators and $t$ is replaced by $n/k$ where $n$ is the sample size
of observations and $k$ is the number of observations used in estimation.}
{An example where we carry out these calculations is given for
  simulated data in Section \ref{subsubsec:rare}.}

\subsection{{Exploring for HRV with the Hillish estimator}.}\label{subsec:Hillish} The Hillish estimator was designed
for detection of the CEV model \cite{
heffernan:resnick:2005,heffernan:tawn:2004,heffernan:resnick:2007,  
lindskog:resnick:roy:2014,
das:mitra:resnick:2013,das:resnick:2011b,das:resnick:2011}
and extended to detecting hidden
regular variation in \cite{das:resnick:2015}. 
The generalized polar coordinate transform converts Cartesian
coordinates in the definition of regular variation into coordinates
satifying the CEV model. 
In this paper we show that the Hillish technique can detect HRV
when the  cone  removed from $\R^{2}_{+}$ or $\R^{2}$ 
 is $[\smallwedge]$. {The Hillish procedure is described below. 
First we define a conditional extreme value model.}

\subsubsection{The CEV model.}\label{subsub:cev}
Suppose the {random variables}  
 $(\xi,\eta) $ {form a random element of} $\R_+\times \R$ and there exists a regularly
varying function $b(t)\to\infty$ and a non-null measure {$\mu \in
\M((0,\infty)\times \R)$} and 
\begin{equation}\label{eqn:CEV}
 t \Prob\Bigl[\Bigl(\frac{\xi}{{b}(t)},{\eta}\Bigr) \in \;
   \cdot\;\Bigr] \to \mu(\cdot),\qquad \text{ in }\M ((0,\infty)\times \R).
\end{equation}
{Note that \eqref{eq:limMeasPolar} is of this form where only the first
component $\xi=d(\bZ,\bC_0)$ is scaled.
See also \eqref{eq:>smallwedge}  and
\eqref{eq:<smallwedge}. }
Additionally assume that 
\begin{itemize}
 \item[(a)] $\mu((r,\infty] \times  \cdot \,)$ is a non-degenerate
   measure 
for any fixed $r>0$, and,
 \item[(b)] $H(\cdot ):=\mu((1,\infty)\times \cdot \,)$ is a probability distribution. 
\end{itemize}
Then
 $(\xi,\eta)$ satisfies a \emph{conditional extreme value model} and we write
 $(\xi,\eta) \in \CEV({b},\mu)$. Note \eqref{eq:>smallwedge} and
 \eqref{eq:<smallwedge} are of the form given in \eqref{eqn:CEV}.
 {Hence the
   HRV statements are equivalent to the appropriate transforms of the
   variables following a CEV model.}  
 
\subsubsection{The Hillish procedure.}\label{subsub:hillish}
Now suppose 
 $(\xi_i,\eta_i); 1\le i \le n$ are iid replicates of $(\xi,\eta)$.
Define
$$
\begin{array}{llll}
\xi_{(1)} \ge \ldots \ge \xi_{(n)} & \text{The decreasing order
statistics of  $\xi_1,\ldots,\xi_n$.}\\[1mm]
\eta_i^*, ~ 1 \le i \le n & \text{The $\eta$-variable corresponding to
$\xi_{(i)}$, also called}\\[1mm] 
& \text{ the concomitant of $\xi_{(i)}$.}\\ [1mm]
N_{i}^k= \sum\limits_{l=i}^k \bone_{\{\eta_l^* \le \eta_i^*\}}& \text{Rank of $\eta^*_i$ among
$\eta_1^*,\ldots,\eta_k^{*}$. We write $N_i=N_i^k$.}\\ [1mm]
\eta_{1:k}^* \le \eta_{2:k}^* \le \ldots \leq \eta_{k:k}^* & \text{The
increasing order statistics of $\eta_1^*,\ldots,\eta_k^*$.}\\[1mm]
\end{array}
$$
{By analogy with the Hill estimator and the Hill plot, }
the {\it Hillish statistic\/} is defined for  $1\le k\le n$ as 
 \begin{align}\label{def:Hillish}
 \Hillish_{k,n}=\Hillish_{k,n}(\bbxi,\bbeta) := \frac{1}{k} \sum\limits_{j=1}^{k} \log \frac{k}{j} \log \frac{k}{N_{j}^k}
  \end{align}
According to \cite[Propositions 2.2 and 2.3]{das:resnick:2011b}, if
$(\xi,\eta)\in \CEV(b,\mu) $ then {there exists a limit $I_\mu$
  and }
$\Hillish_{k,n}\stackrel{P}{\to}
I_\mu$ and moreover, $\mu $ is a product measure iff both 
\begin{align}\label{eq:onetwo}
\Hillish_{k,n}(\bbxi,\bbeta) \cinP 1 \quad\text{and} \quad \Hillish_{k,n}(\bbxi,-\bbeta) \cinP 1, \end{align}
{as $k\to \infty, n\to \infty, n/k\to \infty$}. Note the limits in \eqref{eq:>smallwedge},
 \eqref{eq:<smallwedge} are all product measures. {Hence the diagnostic
   for detecting regular variation with a specified    forbidden zone
for the random vector $\bZ$
 is to
   plot  the Hillish   statistic of $\gpolar (\bZ)$.} 

{We emphasize
 that if \eqref{eq:onetwo} holds, we have empirical behavior {\it consistent\/}
 with the presence of regular variation but this does not prove
 existence of regular
 variation. When the Hillish technique fails because the plots do not
 hug the line at height 1, we can reject a hypothesis of
 regular variation. }


\section{Data Analysis with Simulated Data}\label{sec:dataAnal}
Before moving to real data, we test our analysis techniques} on two simulated data sets to
see how well they perform in Section \ref{ex:sim1} and \ref{ex:sim2}.
 Further, we
discuss  MRV and HRV properties and their detection. {In Section \ref{subsec:data},} we analyze two real bivariate data sets
both of which exhibit heavy-tailed margins and strong asymptotic dependence.

\subsection{Example 1: Full asymptotic dependence.}\label{ex:sim1} 
Suppose $Z_{1}\sim$ Pareto(1.5) and $Z_{2}\sim$ Pareto(2.5) and independent of each other. Let $B_{1}, B_{2}$ be iid Bernoulli (0.5) random variables also independent of $Z_{1}$ and $Z_{2}$. Now define the vector $\bX=(X_{1},X_{2})$ as 
\begin{align*}
X_{1}&=B_{1}Z_{1}+(1-B_{1})Z_{2},\\
X_{2}&=B_{1}Z_{1}+B_{2}(1-B_{1})(1.5Z_{2})+(1-B_{2})(1-B_{1})(0.5Z_{2})
\end{align*}

\begin{figure}[h] 
\centering
\includegraphics[width=6in]{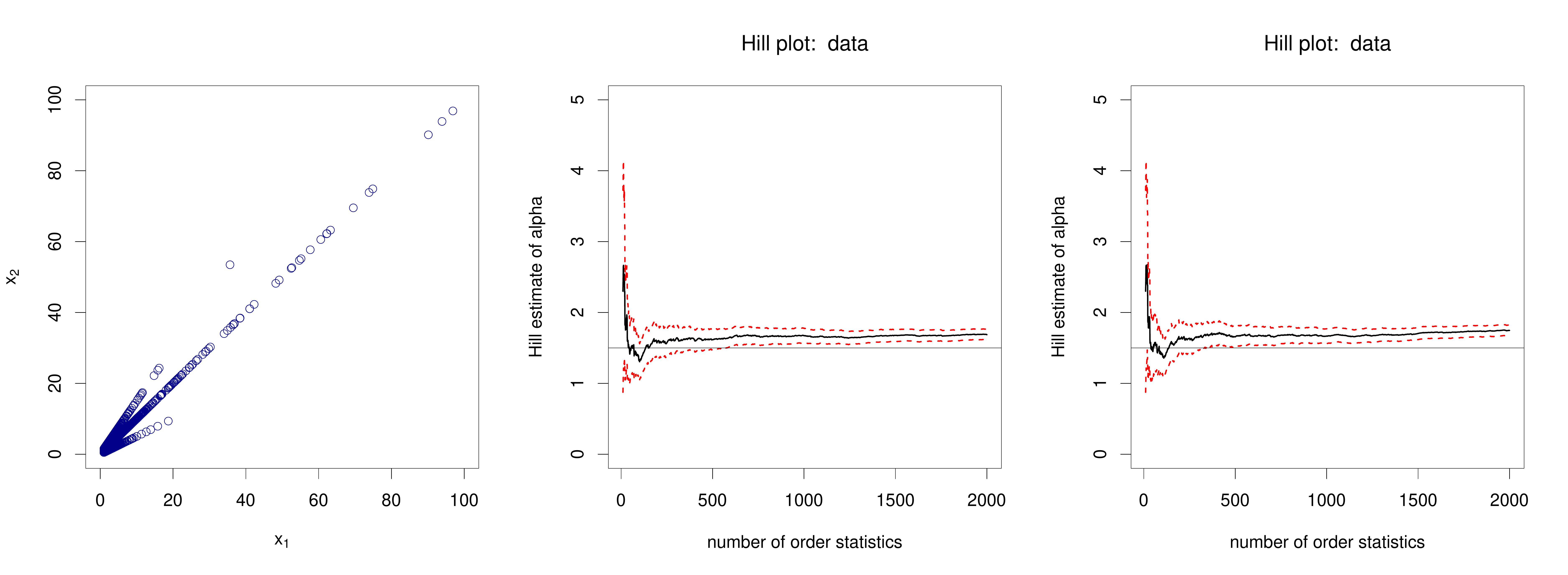}\qquad
\caption{Example \ref{ex:sim1}: (Left) Scatter plot of 10,000 data points. (Center
  and right) Hill plots for tail parameters of the marginal
  distribution{s of $X_1$ (center) and $X_2$ (right).}{The horizonal lines are at height $1.5.$}} 
\label{fig:Margins_sim}
\end{figure}
By construction 
$${
\bX= 
\begin{cases}
(Z_1,Z_1),& \text{ with probability }P[B_1=1]=\frac 12,\\
(Z_2,1.5Z_2) ,& \text{ with probability }P[B_1=0,B_2=1]=\frac 14,\\
(Z_2,0.5 Z_2), & \text{ with probability }P[B_1=0,B_2=0]=\frac 14,\\
\end{cases}
}$$
so that $\bX=(X_{1},X_{2})$ lies on $y=x$ with probability 0.5.
With probability 0.25 each it is either
on the line $y=0.5x$ or on $y={1.5}x$. In
this model, $\bX$ is MRV with parameter $\alpha=1.5$ and it has full
dependence on the diagonal $[\diag]:= \{(x,y)\in\R_{+}^{2}: y=x\}$.
On the other hand, on $\R^{2}_{+} \setminus [\diag]$, 
\begin{align*}
t\,\PR \left(\frac{d((X_{1},X_{2}), [\diag])}{t^{1/2.5}/2\sqrt{2}} > x, \frac{(X_{1},X_{2})}{\sqrt{2}d((X_{1},X_{2}), [\diag])} = \ba \right) & = t\,\PR \left(\frac{(2|X_{1}-X_{2}|}{t^{1/2.5}} > x, \frac{(X_{1},X_{2})}{|X_{1}-X_{2}|} = \ba \right)\\
& \to x^{{-2.5}} \times \frac 14 = \frac 12
  \nu_{{2.5}}(x,\infty)S_{{0}}(\{{\sqrt 2 \ba }\}).
\end{align*} 
where $\ba=(2,1)$ or $(2,3)$ {
and $S_0(\sqrt 2 \ba)=0.5$}. Hence we have HRV on $\R_+^{2}\setminus[\diag]$ with tail parameter $\alpha_{0}=2.5$.

\begin{figure}[h]
\includegraphics[width=6in]{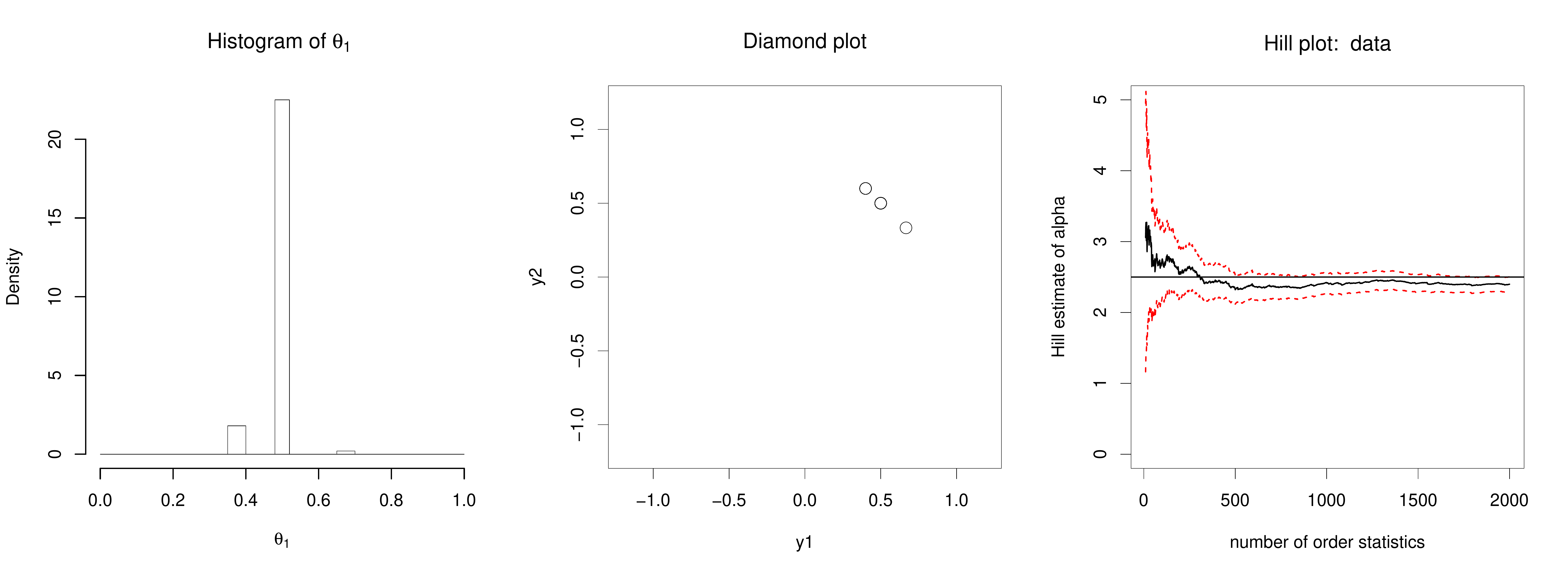}
\vspace{-.2in}
\caption{Example \ref{ex:sim1}: {Histogram} of $\theta_1$ (left), diamond plot (center),  and
  Hill plot for tail estimate of $|X_{1}-X_{2}|$ (right). {The
  horizontal line is at height 2.5}.}
\label{fig:diamond_sim1}
\end{figure}

We generate $n=10,000$ iid samples from this data set. The scatter plot in Figure \ref{fig:Margins_sim} shows the dependence structure of $\bX$ along with Hill plots of $X_{1},X_2$ which supports the premise that $\alpha=1.5$.

\begin{figure}[h]
\includegraphics[width=4.5in]{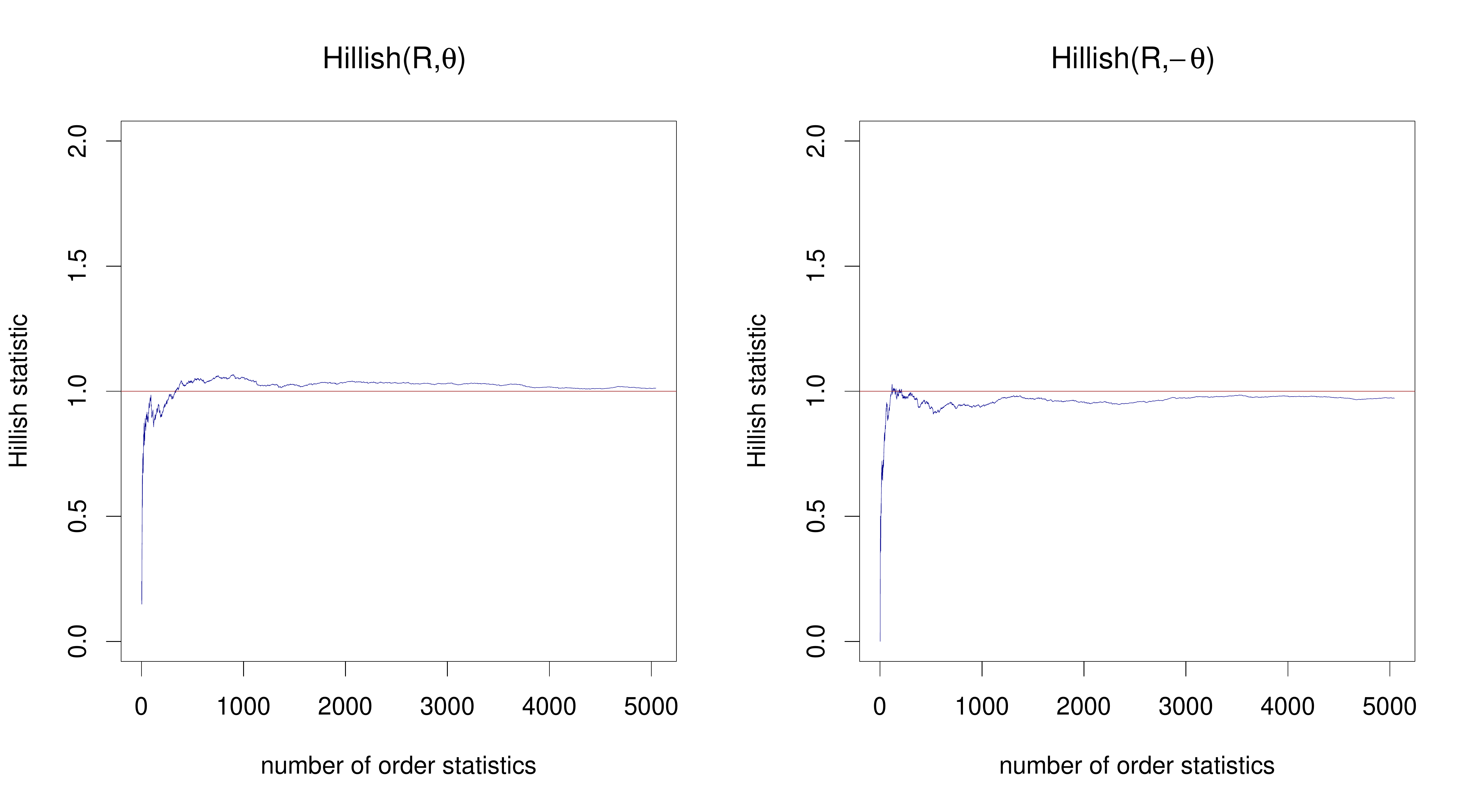}
\vspace{-.2in}
\caption{Example \ref{ex:sim1}: Hillish plots for $(\xi,\eta)$ (left) and
  $(\xi,-\eta)$ (right) where $(\xi,\eta)=(|X_1-X_2|,X_1/|X_1-X_2|)$.}
\label{fig:hillish_sim1}
\end{figure}

To understand the dependence structure of the variables $\bX$, we
{make} {the diamond plot, the} transformation from $\R^2\mapsto 
\aleph_{\bzero} \subset \R^2$ onto the $L_1$ unit sphere represented
by the diamond $\{(\theta_1,\theta_2):|\theta_1|+|\theta_u|=1\}$
We do the mapping at various thresholds determined by $k$, the number
of order statistics of the norms 
$\|\bx\|=|x_1|+|x_2|$. In Figure \ref{fig:diamond_sim1}, the diamond plot and  histogram of the angular measure is shown for $k=100$. Clearly the data is concentrated at $x=y$. The Hill plot of the quantity $|X_1-X_2|$ supports the fact that data was generated with hidden tail parameter $\alpha_{0}=2.5$.

Finally, we look at the Hillish statistic  for
$(\xi,\eta)=(|X_1-X_2|,X_1/|X_1-X_2|)$ after removing $[\diag]$.
 The Hillish plot in Figure
\ref{fig:hillish_sim1} is convincingly stable and close to 1,
supporting the presence of HRV as expected from the generation
procedure here.

\subsection{Example 2: Strong asymptotic dependence.}\label{ex:sim2} 
{In this example, we simulate data from a model with strong asymptotic dependence. We also exhibit estimation of rare probabilities and conduct a sensitivity analysis when the support of the regular variation for the the first level, and hence that for HRV is not correctly identified.} 

\begin{figure}[h] 
\centering
\includegraphics[width=6.5in]{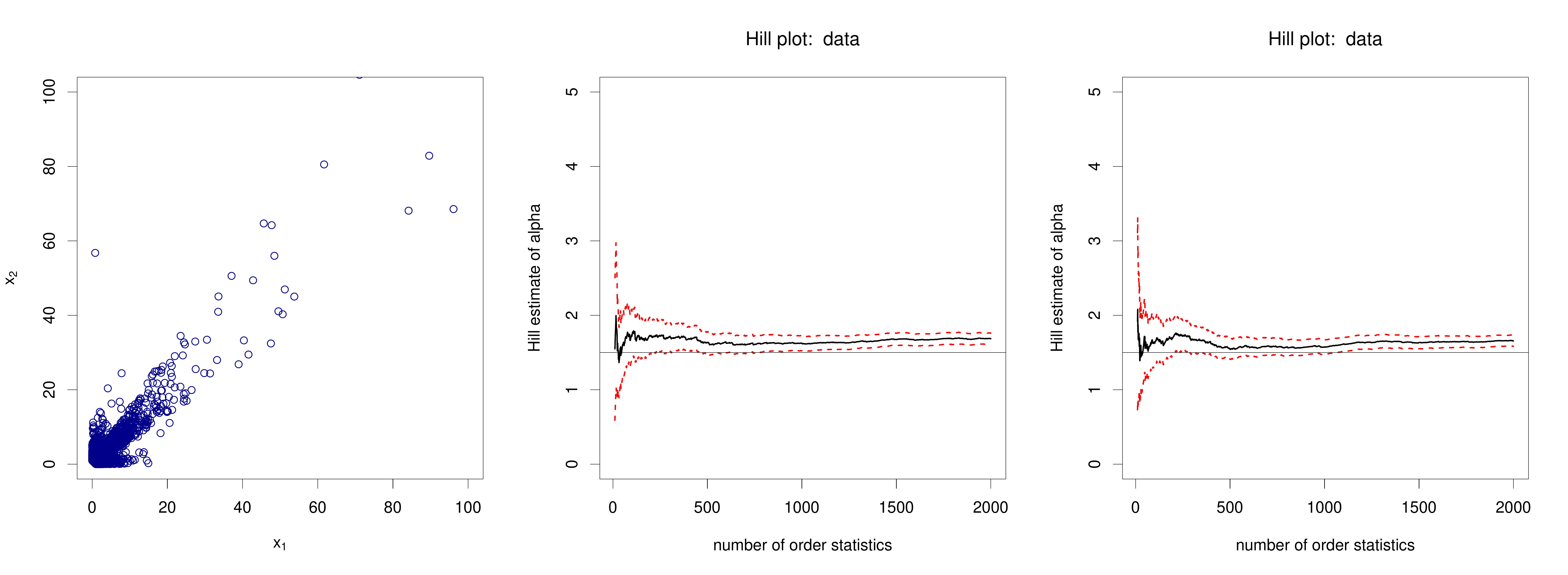}\qquad
\caption{Example \ref{ex:sim2}: (Left) Scatter plot of 30,000 data points. (Center
  and right) Hill plots for tail parameters of the margin{al
    distributions } of
  $X_1,X_2$. {The horizontal lines are at height $1.5$.}} 
\label{fig:Margins_sim2}
\end{figure}

Suppose $R_{1}\sim$ Pareto(1.5) and $R_{2}\sim$ Pareto(2.5) and
independent of each other. Let $\Theta_{1}\sim \Unif[0.4,0.6]$,
$\Theta_{2}\sim \Unif([0,1]\setminus[0.4,0.6)]$, $B\sim$ Bernoulli
(0.5) random variables. Assume the random variables are 
all independent.
  Now define the vector $\bX=(X_{1},X_{2})$ as  
\begin{align*}
X_{1}&=BR_{1}\Theta_{1} + (1-B)R_{2}\Theta_{2},\\
X_{2}&=BR_{1}(1-\Theta_{1}) + (1-B)R_{2}(1-\Theta_{2}).
\end{align*}

By construction, $\bX$ is MRV on $\R_{+}^{2}\setminus\{\bzero\}$ with tail parameter $\alpha=1.5$. 
{Corresponding to $(\theta_l,\theta_u)=(0.4,0.6)$, we have by
\eqref{e:parCone} that $(a_l,a_u)=(0.67,1.5)$ and therefore}
\[[\smallwedge] := \{(x,y)\in \R_{+}^{2}: 0.67 x \le y \le 1.5 x\}.\]
{This gives hidden regular variation with tail parameter
$\alpha_{0}=2.5$
 on $\R_{+}^{2}\setminus[\smallwedge]$.}

\begin{figure}[h]
\includegraphics[width=6.5in]{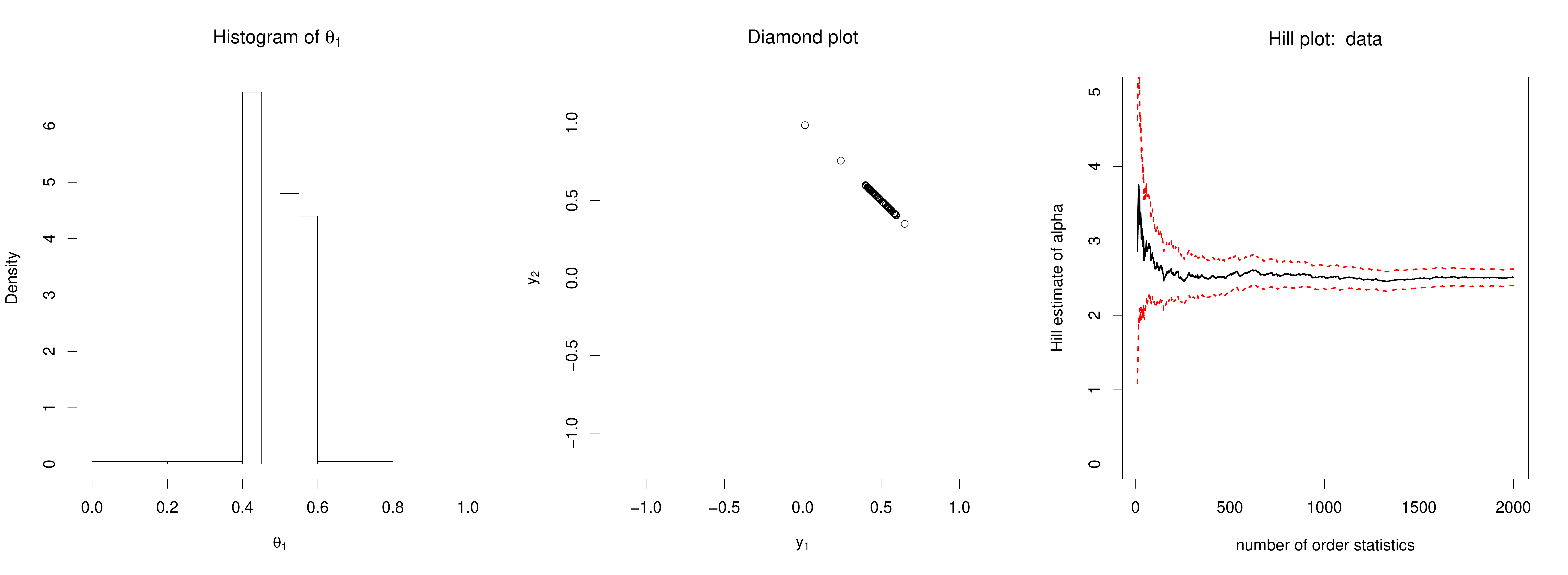}
\vspace{-.2in}
\caption{Example \ref{ex:sim2}: Diamond plot, histogram and Hill plot for tail estimate of
  $d(\bX,[\smallwedge])$. {The horizontal line in the Hill plot is at
  height 2.5.}}
\label{fig:diamond_sim2}
\end{figure}

\begin{figure}[h]
\includegraphics[width=6.5in]{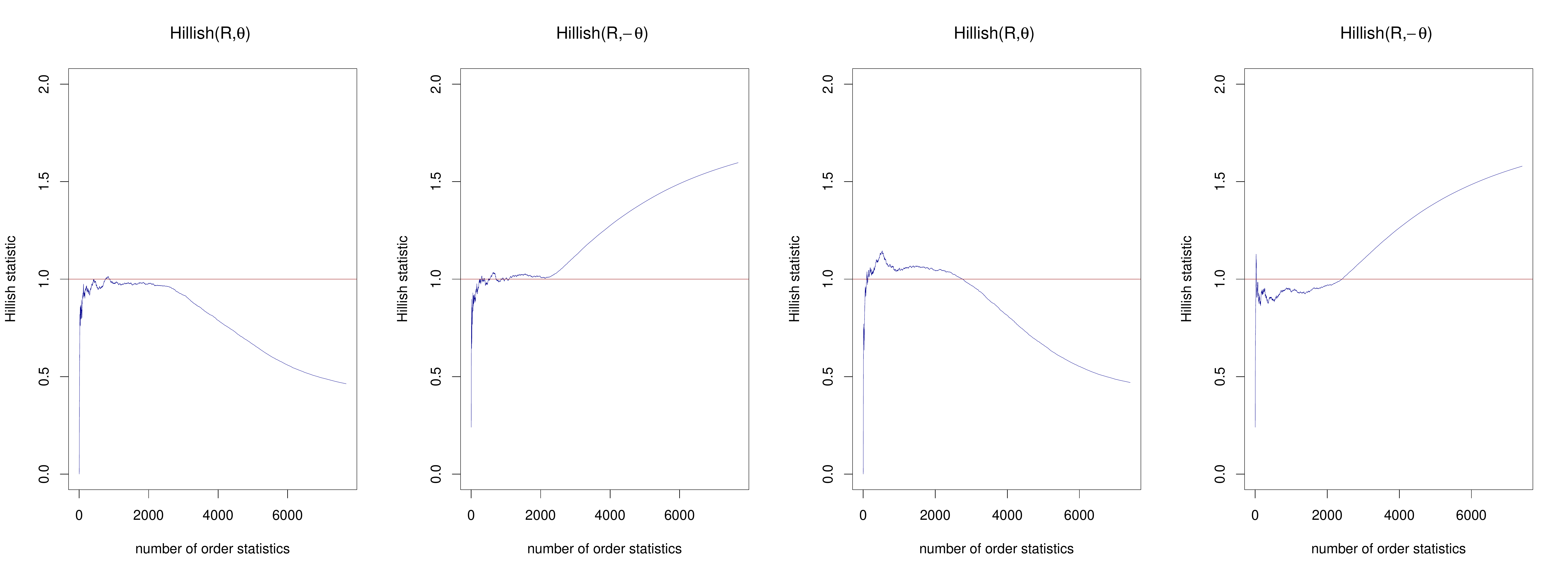}
\vspace{-.2in}
\caption{{Example \ref{ex:sim2}: Hillish plots for (i)$(\xi_{1},\eta_{1}),$ (ii)$(\xi_{1},-\eta_{1})$,  (iii)$(\xi_{2},\eta_{2}),$ (iv)$(\xi_{2},-\eta_{2})$  respectively where $(\xi_{1},\eta_{1})$ and $(\xi_{2},\eta_{2})$ are obtained by using   \eqref{eq:>smallwedge} and \eqref{eq:<smallwedge} on $\bX=(X_{1},X_{2})$.} }
\label{fig:hillish_sim2}
\end{figure}
 
  \begin{figure}[h]

\includegraphics[width=6.5in]{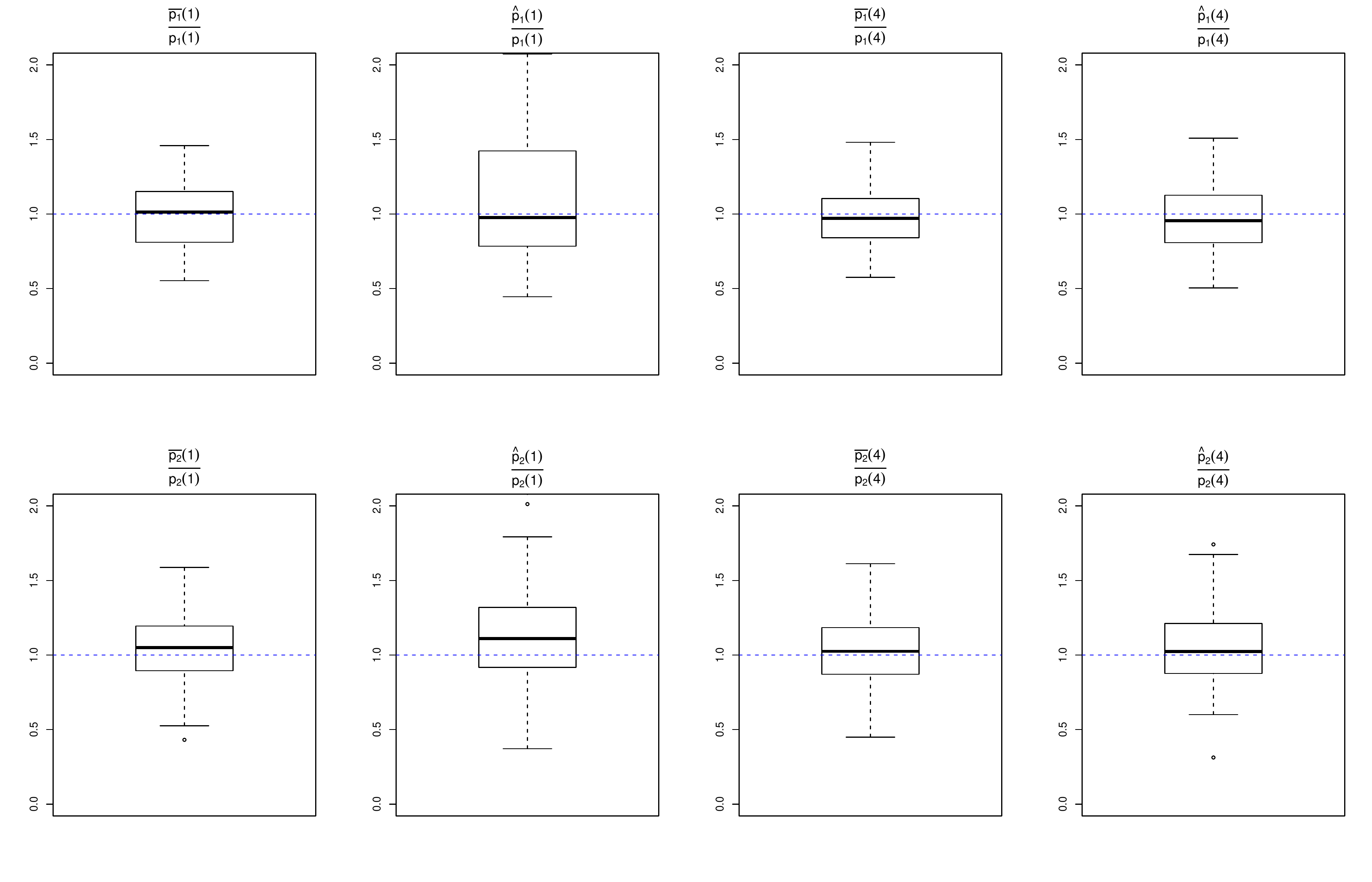}
\vspace{-.2in}
\caption{Example \ref{ex:sim2}: Boxplots for $\frac{\bar{p}_{1}(1)}{p_{1}(1)},\frac{\hat{p}_{1}(1)}{p_{1}(1)},\frac{\bar{p}_{1}(4)}{p_{1}(1)},\frac{\hat{p}_{1}(4)}{p_{1}(4)}$ (top) and $\frac{\bar{p}_{2}(1)}{p_{2}(1)},\frac{\hat{p}_{2}(1)}{p_{2}(1)},\frac{\bar{p}_{2}(4)}{p_{2}(4)},\frac{\hat{p}_{2}(4)}{p_{2}(4)}$ (bottom) .}
\label{fig:estimates_sim2}
\end{figure}

We generate $n=30,000$ iid samples from this data set. The scatter plot in Figure \ref{fig:Margins_sim2} shows the dependence structure of $\bX$ along with Hill plots of $X_{1},X_2$ which supports the premise that $\alpha=1.5$.

To understand the dependence structure of  $\bX$, we {graph} the
diamond plot as used in the previous example. We do the mapping at
various thresholds determined by $k$, the number 
of order statistics of the norms 
$|x|+|y|$.
In Figure \ref{fig:diamond_sim2}, the  histogram of angles and the diamond
plot are shown for $k=100$ and shows the angles are Uniform in
[0.4, 0.6] for high values of $|x|+|y|$. The Hill plot of the quantity
$d(\bX,[\smallwedge])$ supports the fact that data was generated with
hidden tail parameter $\alpha_{0}=2.5$. 
Finally, we look at a Hillish statistic  for 
{$(\xi_{1},\eta_{1})$ and $(\xi_{2},\eta_{2})$ respectively which are  obtained by using   \eqref{eq:>smallwedge} and \eqref{eq:<smallwedge} on $\bX=(X_{1},X_{2})$ after removing $[\smallwedge]$.} The Hillish
plots in Figure \ref{fig:hillish_sim2} are again convincingly stable and
close to 1 {and detect the hidden regular variation.}

\subsubsection{Probabilities of rare sets for this example.}\label{subsubsec:rare}
Now to further illustrate our methods, we  compute $\PR(X_{2}-2X_{1}>x)$ and $\PR(X_{2}-3X_{1}>x)$.
Without resorting to hidden regular variation we have $\bX$ is MRV on $\R_{+}^{2}\setminus\{\bzero\}$ with tail parameter $\alpha=1.5$
and the limit measure concentrates on
\[[\smallwedge] := \{\bx\in \R_{+}^{2}: a_{l} x_1 \le x_2 \le a_{u}
x_1\} = \{\bx \in \R_{+}^{2}: 0.67 x_1 \le x_2 \le 1.5 x_1\}.\]

Hence with {the usual regular variation} techniques we would estimate both
$$ \PR(X_{2}-2X_{1}>x) \approx 0 \quad \text{and} \quad \PR(X_{2}-3X_{1}>x) \approx 0.$$

But for this example we can compute the exact answer without resorting
to asymptotic approximations and we get,
\begin{align}
p_{1}(x):=& \PR(X_{2}-2X_{1}>x)  = \frac 12 \PR(R_{1}(1-\Theta_{1})
                                 {-} 2R_{1}\Theta_{1} > x) +
                                 \frac 12 \PR(R_{2}(1-\Theta_{2}) {-}
                                 2R_{2}\Theta_{2} > x)\nonumber \\ 
\intertext{and because $3\Theta_1>1$, this is}
                                   =&  \frac 12
                                     \PR(R_{2}(1-3\Theta_{2}) > x) =
                                     \frac 5{84} x^{{-2.5}}. \label{e:p1x}
\intertext{
Similarly we can compute} 
p_{2}(x):=&\PR(X_{2}-{3}X_{1}>x)  = \frac 5{112}
            x^{{-2.5}}.\label{e:p2x} 
\end{align}

     \begin{figure}[h]
\includegraphics[width=6in]{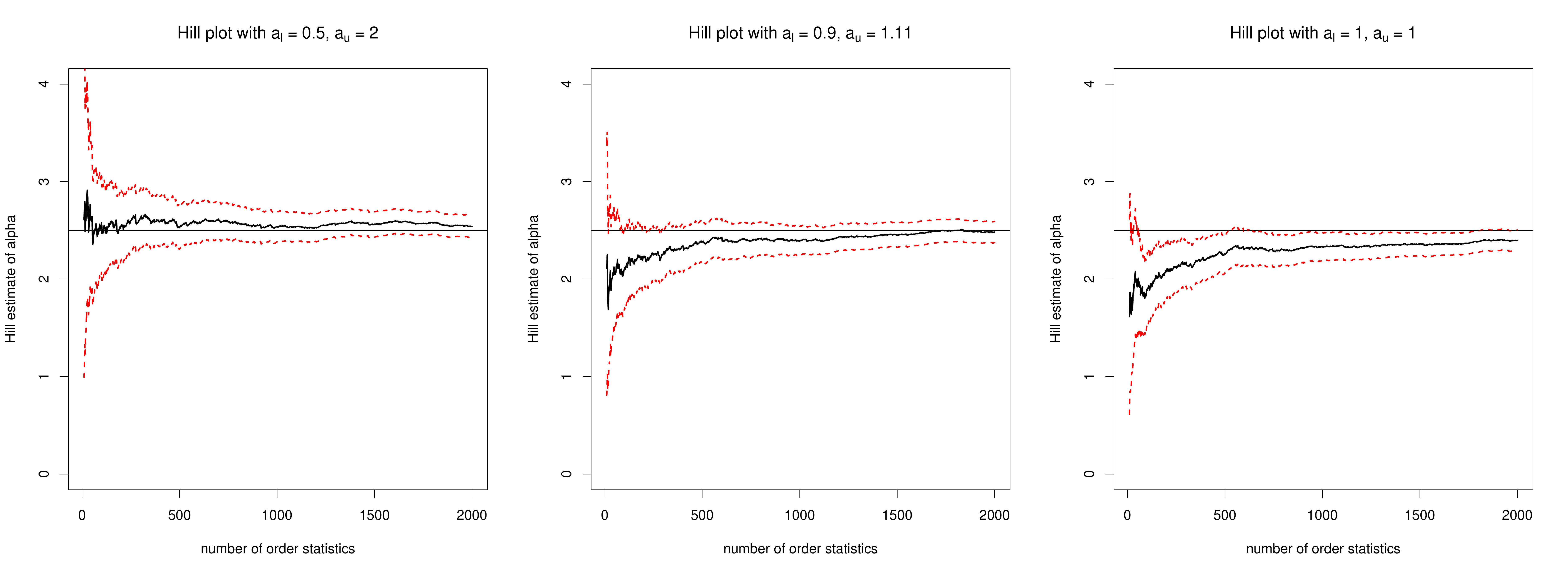}
\vspace{-.2in}
\caption{{Example \ref{ex:sim2}: Hill plots for estimating
    $\alpha_{0}$ (known to be 2.5 in the model) when the support set
    of MRV is incorrectly specified as  $(a_{l}, a_{u})= (0.5,2)$
    (left), $(a_{l}, a_{u})= (0.9,1.11)$ (middle), $(a_{l}, a_{u})=
    (1,1)$ (right) respectively. The horizontal line in the Hill plot
    is at height 2.5.}} 
\label{fig:sim2_alpha0_err}
\end{figure}

     \begin{figure}[h]
\includegraphics[width=6in]{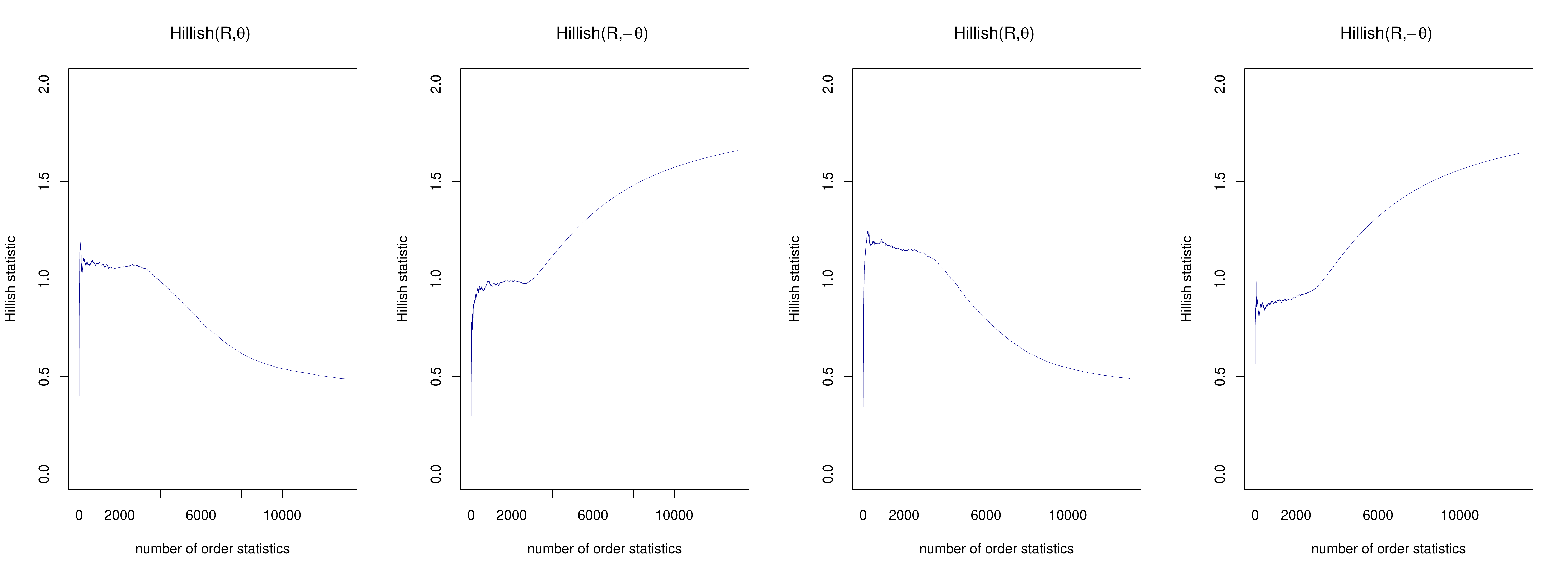}
\vspace{-.2in}
\caption{{Example \ref{ex:sim2}:  Hillish plots for (i)$(\xi_{1},\eta_{1}),$ (ii)$(\xi_{1},-\eta_{1})$,  (iii)$(\xi_{2},\eta_{2}),$ (iv)$(\xi_{2},-\eta_{2})$  respectively where $(\xi_{1},\eta_{1})$ and $(\xi_{2},\eta_{2})$ are obtained by using   \eqref{eq:>smallwedge} and \eqref{eq:<smallwedge} on $\bX=(X_{1},X_{2})$ and the support of MRV is (incorrectly) identified by $(a_{l},a_{u})=(0.9,1.11)$.}}
\label{fig:sim2_hillish_err}
\end{figure}
 
  \begin{figure}[h]
\includegraphics[width=6in]{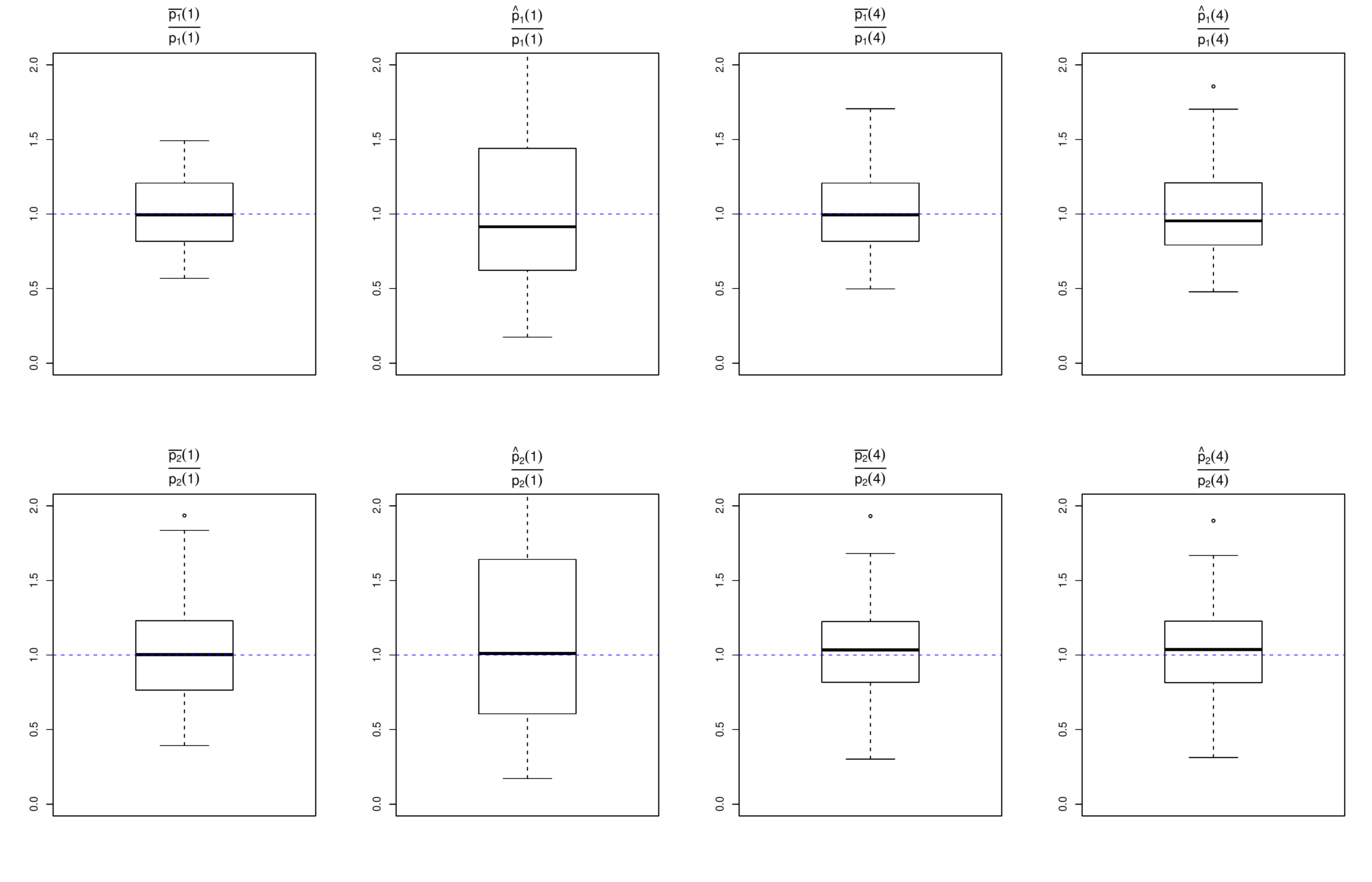}
\vspace{-.2in}
\caption{Example \ref{ex:sim2}: Boxplots for $\frac{\bar{p}_{1}(1)}{p_{1}(1)},\frac{\hat{p}_{1}(1)}{p_{1}(1)},\frac{\bar{p}_{1}(4)}{p_{1}(4)},\frac{\hat{p}_{1}(4)}{p_{1}(4)}$ (top) and $\frac{\bar{p}_{2}(1)}{p_{2}(1)},\frac{\hat{p}_{2}(1)}{p_{2}(1)},\frac{\bar{p}_{2}(4)}{p_{2}(4)},\frac{\hat{p}_{2}(4)}{p_{2}(4)}$ (bottom) with (incorrect) identification of support where $a_l=0.5, a_{u}=2$.}
\label{fig:estimates_sim2_alt1}
\end{figure}

  \begin{figure}[h]
\includegraphics[width=6in]{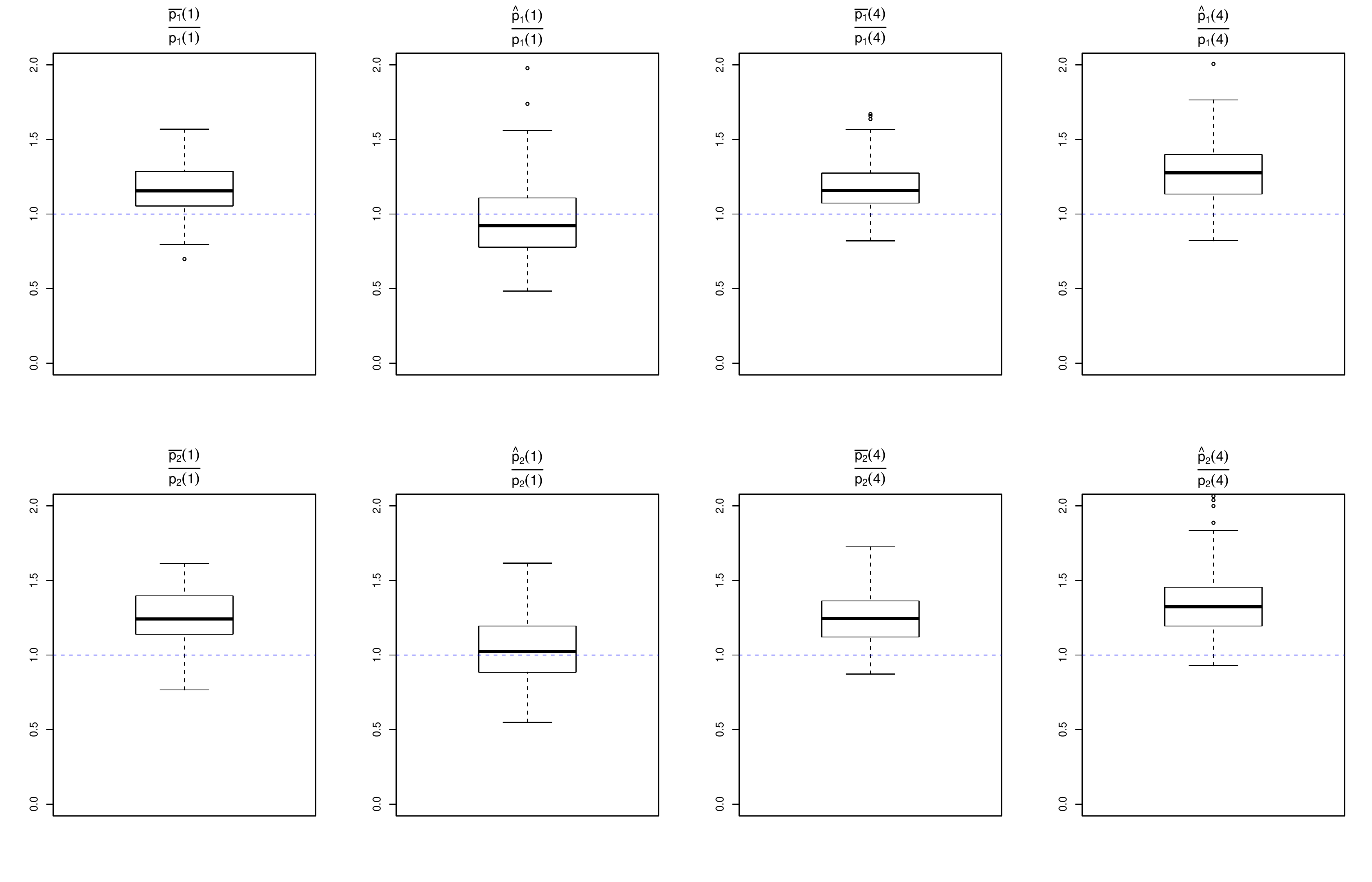}
\vspace{-.2in}
\caption{Example \ref{ex:sim2}: Boxplots for $\frac{\bar{p}_{1}(1)}{p_{1}(1)},\frac{\hat{p}_{1}(1)}{p_{1}(1)},\frac{\bar{p}_{1}(4)}{p_{1}(1)},\frac{\hat{p}_{1}(4)}{p_{1}(4)}$ (top) and $\frac{\bar{p}_{2}(1)}{p_{2}(1)},\frac{\hat{p}_{2}(1)}{p_{2}(1)},\frac{\bar{p}_{2}(4)}{p_{2}(4)},\frac{\hat{p}_{2}(4)}{p_{2}(4)}$ (bottom) 
 with (incorrect) identification of support where $a_l=0.9, a_{u}=1.1$.}
\label{fig:estimates_sim2_alt2}
\end{figure}

{If we pretend we do not know the exact answers provided by 
\eqref{e:p1x}, \eqref{e:p2x}, can we give}  better estimates than 0
using
asymptotic methods on our simulated data
set?
 Under hidden regular variation after removing
 $\bC_{0}=[\smallwedge]$, we know that \eqref{e:delC0} holds for $\bX$
 and hence 
\begin{equation}\label{e:delC000}
\frac nk\Prob \Bigl[ \frac{d({\bX}, \bC_{0})}{b_0(\frac nk)} > x,
\frac{\bX}{d(\bX,\bC_0)}\Bigr) \in \Lambda \,\Bigr]
\to x^{{-\alpha_{0}}}  \times
S_0(\Lambda) 
\end{equation}
as $n\to\infty$, $k\to \infty$, $k/n\to0$, where $S_0(\cdot) $ is a probability measure and  $\Lambda\subset\aleph_{\mathbb{C}_0}$.  From Section \ref{subsec:riskestimate}, 
we have for $x>0$, as $n\to \infty$, $k\to\infty$, $k/n \to 0$,
\begin{align}\label{estimate}
\frac nk p_1(b_0(\frac nk) x)=
\frac nk \PR[X_{2}-2X_{1}>b_0(\frac nk )x] \to  x^{-{\alpha_0}}
   \int_{\{(\mu_1,\mu_2):\mu_2-2\mu_1>0\}} (\mu_2-2\mu_1)^{\alpha_0}
   S_0\bigl(d(\mu_1,\mu_2)\bigr),
   \end{align}
with a similar limiting expression for $p_2(x)$. This suggests we need to estimate $\alpha_0$, $b_0(n/k)$,
$S_0(\cdot)$ and of course the wedge $\mathbb{C}_0$.

For the wedge, we
estimate $\hat a_{l}, \hat a_{u}$ using the $5th$ and $95th$
percentile of the range of $X_{1}/(X_{1}+X_{2})$ for 100 highest
values of $X_{1}+X_{2}$ for each simulation.
In \eqref{e:delC000} replace $x$ by 1 and $\Lambda$ by
$\aleph_{\mathbb{C}_0}$ and then we estimate $b_0(n/k)$ with the $k$th
largest value of $d(\bX_i,\mathbb{C}_0) $ corresponding to $\bX's $
outside $\mathbb{C}_0. $  Alternatively, if we fix $b_0(n/k)$ then, we obtain the appropriate $k$
largest value of $d(\bX_i,\mathbb{C}_0) $ corresponding to $\bX's $
outside $\mathbb{C}_0.$ to be used. For $S_0(\cdot)$ we modify the argument
leading to \cite[Eq. 9.47, p. 313]{resnickbook:2007}. 

Corresponding to  estimates $\widehat{\mathbb{C}}_0$, $\hat
       b_0(n/k)$, and choice of $k$, we enumerate the
       gpolar-transformed points outside $\widehat{\mathbb{C}}_0$,
       corresponding to the $k$ largest values of
       $d(\bX_i,\widehat{\mathbb{C}}_0)$ as
       $\{(r_{i},\mu_{1i},\mu_{2i}): 1\le i \le k\}$. Then using these
       points we estimate $S_0 (\cdot)$ with the empirical
       distribution as
$$\hat S_0(\cdot)=\frac 1k \sum_{i=1}^k \epsilon_{(\mu_{1i},\mu_{2i})} (\cdot).$$
This leads to the risk estimates,
\begin{align}
 \hat{p}_{1}(x) =& {x^{-{\hat \alpha_0}}  } \frac kn (\hat b_0(n/k))^{\hat{\alpha_{0}}}
                   \frac 1k \sum_{i=1}^{k}
                   (\mu_{2i}-2\mu_{1i})^{\hat{\alpha_{0}}}\bone_{\{\mu_{2i}>2\mu_{1i}\}}
                   , \label{newestimate}\\
 \hat{p}_{2}(x)=&     {x^{-{\hat \alpha_0}}  } \frac kn (\hat b_{0}(n/k))^{\hat{\alpha_{0}}} \frac 1k \sum_{i=1}^{k} (\mu_{2i}-{3}\mu_{1i})^{\hat{\alpha_{0}}}\bone_{\{\mu_{2i}>2\mu_{1i}\}}.\label{newestimate2}
   \end{align}

Since $\alpha_{0}, a_{l}, a_{u}$ are known for this simulation
example, we may compare $\hat 
p_i(x)$ with $\bar p_i(x)$ estimated using the three known values.
We carry out comparisons using  $x=1$ and $x=4$. We conduct simulations
with $n=10,000$  and use a value of $k$ corresponding to $b_0(n/k)=2$. We compute
$\hat{p}_{1}(1),\hat{p}_{1}(4),\bar{p}_{1}(1),\bar{p}_{1}(4),$ for 100
iterations and create box plots of
$\frac{\hat{p}_{1}(1)}{p_{1}(1)},$
$\frac{\hat{p}_{1}(4)}{p_{1}({4})}$,
$\frac{\bar{p}_{1}({1})}{p_{1}(1)}$, $\frac{\bar{p}_{1}(4)}{p_{1}(4)}$
and we do the same for $p_{2}(1)$ and $p_{2}(4).$ From Figure  
\ref{fig:estimates_sim2}, clearly the estimates perform pretty well
since the ratios of the estimates to the real values are very close to
1. Clearly, when $\alpha_{0}$ is estimated the error bounds become
larger, but still perform reasonably. Note that the quantities we
compute have low probabilities: 
$$p_{1}(1)=0.{0}595,\quad p_{2}(1)=0.044, \quad p_{1}(4)=0.002,
\quad p_{2}(4)=0.0014.$$ 

To summarize: This estimation procedure can be used to calculate
risk probabilities in the presence of hidden regular variation when
the primary regular variation gives a 
zero risk estimate.
   
   \subsubsection {Sensitivity analysis in this example}
   {Clearly, the probability estimation procedure we discussed hinges on our ability to appropriately estimate the support set of 
 regular variation at the first level, given by 
  \[[\smallwedge] := \{\bx \in \R_{+}^{2}: a_{l} x_{1} \le x_{2} \le a_{u} x_{1}\}.\]

 An inaccurate estimation of the support leads to  an improper estimation of $\alpha_{0}$ and hence also the probabilities of rare events.  Under the current  Example \ref{ex:sim2},  we conduct a sensitivity analysis of our estimation procedures  by choosing a support set which is different from the one that is specified by the model.  In the example, the support set is identified by $(a_{l}^{*},a_{u}^{*})=(0.67,1.5)$. Recall that we have 30,000 data points from this model.

    First we estimate $\alpha_{0}$ under an improper specification of
    $a_{l}$ and $a_{u}$. This is estimated using a Hill plot of points
    comprised of $(X_{2}-a_{u}X_{1})/\sqrt{1+a_{u}^{2}}$ for
    $X_{2}-a_{u}X_{1}>0$ and  $(a_{l}X_{1}-X_{2})/\sqrt{1+a_{l}^{2}}$
    for $a_{l}X_{1}-a_{u}X_{2}>0$ for different choices of $(a_{l},
    a_{u})$. In Figure \ref{fig:sim2_alpha0_err}, we provide Hill
    plots for estimating $\alpha_{0}$ by using   $(a_{l}, a_{u})=
    (0.5,2), (a_{l}, a_{u})= (0.9,1.11), (a_{l}, a_{u})= (1,1)$
    respectively. Comparing the plots with the one in Figure
    \ref{fig:diamond_sim2}, the estimates clearly move away from the
    actual value $\alpha_{0}=2.5$ as the size of the support
    decreases. When we take $(a_{l},a_{u}) = (0.5,2)$ the data points
    used to estimate $\alpha_{0}$ are a subset of the points used to
    estimate $\alpha_{0}$ when $(a_{l}^{*},a_{u}^{*})=(0.67,1.5)$, and
    thus are regularly varying with parameter $\alpha_{0}=2.5$; hence
    the Hill plot clearly hugs the horizontal line at $y=2.5$. As the
    size of the support set decreases we see the Hill estimates become
    lower than $2.5$ and moves towards $\alpha=1.5$. We also observe
    in Figure \ref{fig:sim2_hillish_err}  that the Hillish plots are
    not  that close to the horizontal line at height 1 when the
    support is not correctly identified; in this case
    $(a_{l},a_{u})=(0.9,1.11)$. In comparison, Figure
    \ref{fig:hillish_sim2} clearly shows that  the Hillish plots are
    close to 1, when the support is correctly specified.  
  
  Finally we estimate probabilities $p_{1}(x)=\PR(X_{2}-2X_{1}>x)$ and
  $p_{2}(x)=\PR(X_{2}-3X_{1}>x)$ when the support sets are identified
  incorrectly. Figure \ref{fig:estimates_sim2_alt1} corresponds to
  boxplots of
  $\frac{\bar{p}_{1}(1)}{p_{1}(1)},\frac{\hat{p}_{1}(1)}{p_{1}(1)},\frac{\bar{p}_{4}(4)}{p_{1}(1)},\frac{\hat{p}_{1}(4)}{p_{1}(4)}$
  and
  $\frac{\bar{p}_{2}(1)}{p_{2}(1)},\frac{\hat{p}_{2}(1)}{p_{2}(1)},\frac{\bar{p}_{2}(4)}{p_{2}(4)},\frac{\hat{p}_{2}(4)}{p_{2}(4)}$
  where $\hat{p}_{i}, i=1,2$ uses the estimators given in
  \eqref{newestimate},\eqref{newestimate2} with $\alpha_{0}$ computed
  using $(a_{l},a_{u})=(0.5,2)$ and  $\bar{p}_{i}, i=1,2$ uses
  $\alpha_{0}=2.5, a_{l}=0.5,a_{u}=2$. Observe that the boxplots are
  quite close to 1, since  $\alpha_{0}$ is estimated well. On the
  other hand in Figure \ref{fig:estimates_sim2_alt2}, the similar
  boxplots are done for $(a_{l},a_{u})=(0.9,1.11)$. In this case,
  $\alpha_{0}$ is not that well-estimated and hence the boxplots are
  clearly away from 1. In both cases, 100 replications of data sets
  with 10,000 data points in each were used to create the boxplots. 
  
  In conclusion we can see that an incorrect identification of the support of regular variation can often lead to incorrect estimates. Although if the identified support of hidden regular variation is a bit smaller than the correct one (which means that the identified support of MRV is larger than the correct one), then the estimates are still quite accurate.}

\newpage
\section{Examples of strong asymptotic dependence with real data.}\label{subsec:data}
We now analyze two real data sets: (i) facebook wall posts and (ii)
returns from Exxon and Chevron.

\subsection{Facebook wall posts}\label{subsec:facebk}

\begin{figure}[h]
\centering
\includegraphics[width=.4\textwidth]{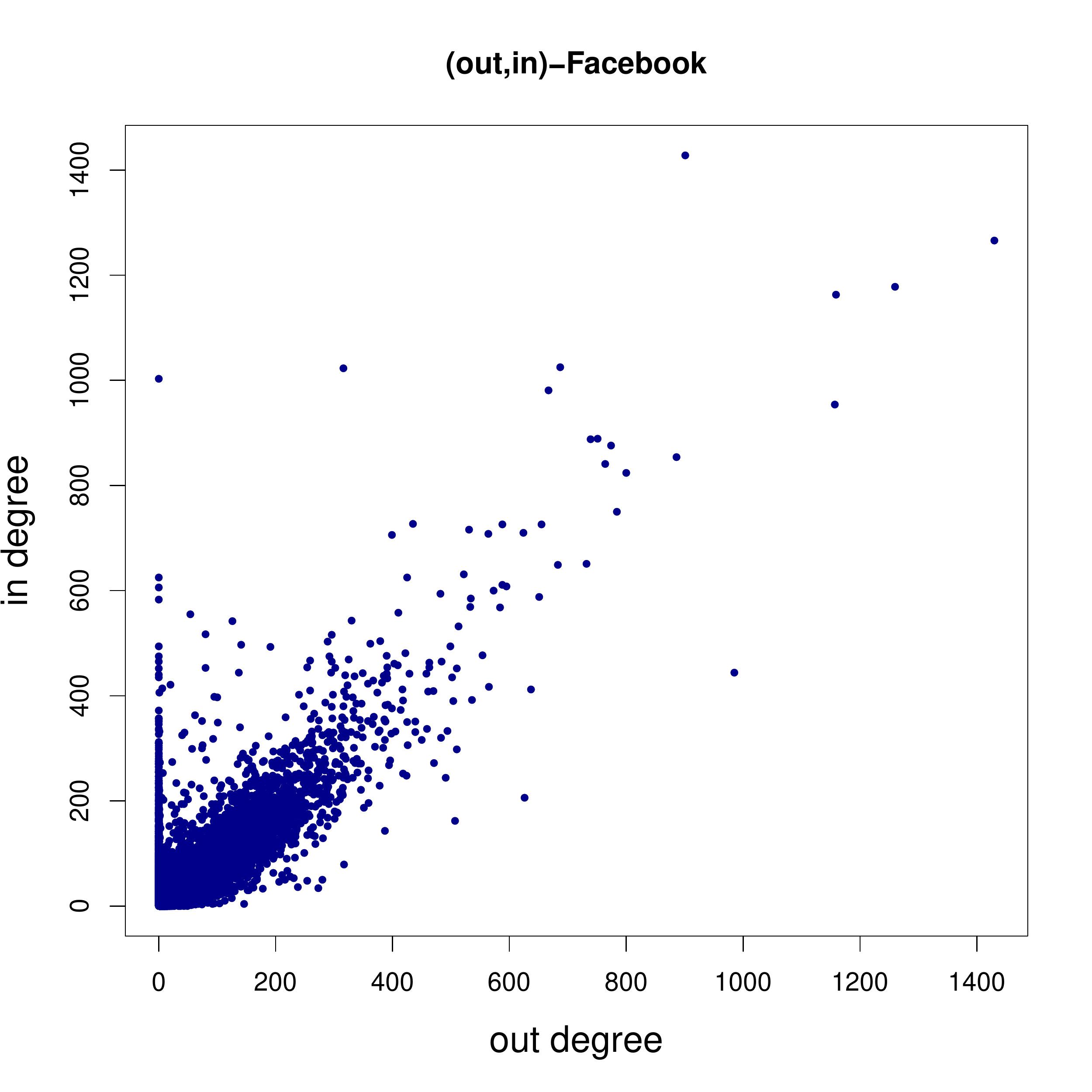}
\vspace{-.23in}
\caption{Scatter plot of node-wise out-degree and in-degree of Facebook wallpost graph.}
\label{fig:fbwallscatter}
\end{figure}

The Facebook wall posts data was downloaded  from
\url{http://konect.uni-koblenz.de/networks/facebook-wosn-wall} and
has been analyzed in \cite{viswanath:2009}. 
Conversion of edge data to
node-indexed in- and out-degree counts was done
using the 
R-package {\it igraph\/} \cite{csardi:nepusz:2006}. 
The data is a
directed network representing posts by Facebook users to other users'
walls.  Nodes are users and a directed edge represents one post from the
user to the user whose wall is receiving the post. There are 46,952
users and 876,993 edges. We focus on out- and in-degree {indexed by the nodes as $\{(Z_{1,i},Z_{2,i}): 1\le i \le 46952\}$}. Of course
this data is not the result of iid replication but is rather
node-indexed; however, for reasons still being investigated, 
conventional tools of heavy tail analysis seem quite effective on
node-indexed network data. The  scatter-plot of (out,in)-degrees in Figure \ref{fig:fbwallscatter}
shows the expected strong asymptotic dependence between out- and  in-degrees.

\begin{figure}[h] 
\centering
\includegraphics[width=6.5in]{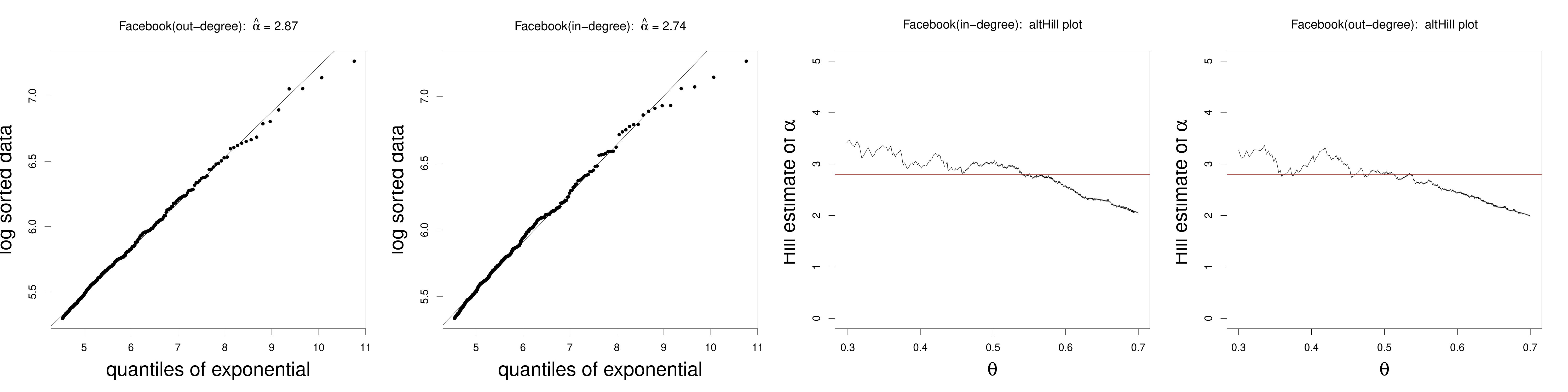}
\caption{(Left) Marginal estimation of tail indices by QQ plot slope
  estimation and (right) altHill plotting for out- and
  in-degree. Horizontal lines are at height 2.8.}
\label{fig:MarginalAnal}
\end{figure}

The plots in Figure \ref{fig:MarginalAnal} give the estimation of
distribution tail
indices for out- and in-degree. The slope estimator based on QQ-plots
(\cite{beirlant:vynckier:teugels:1996,kratz:resnick:1996, resnickbook:2007}) gives approximately
$\alpha=2.8$ for both out- and in-degree. Note this estimate is for the
tail of the cumulative distribution functions and not, as is customary
in network science, the index of the power law of the mass
functions. The Hill estimator is ineffective and we have provided
altHill plots (\cite{resnickbook:2007,resnick:starica:1997a,
  drees:dehaan:resnick:2000}).

\begin{figure}[h]
\centering
\includegraphics[width=5in]{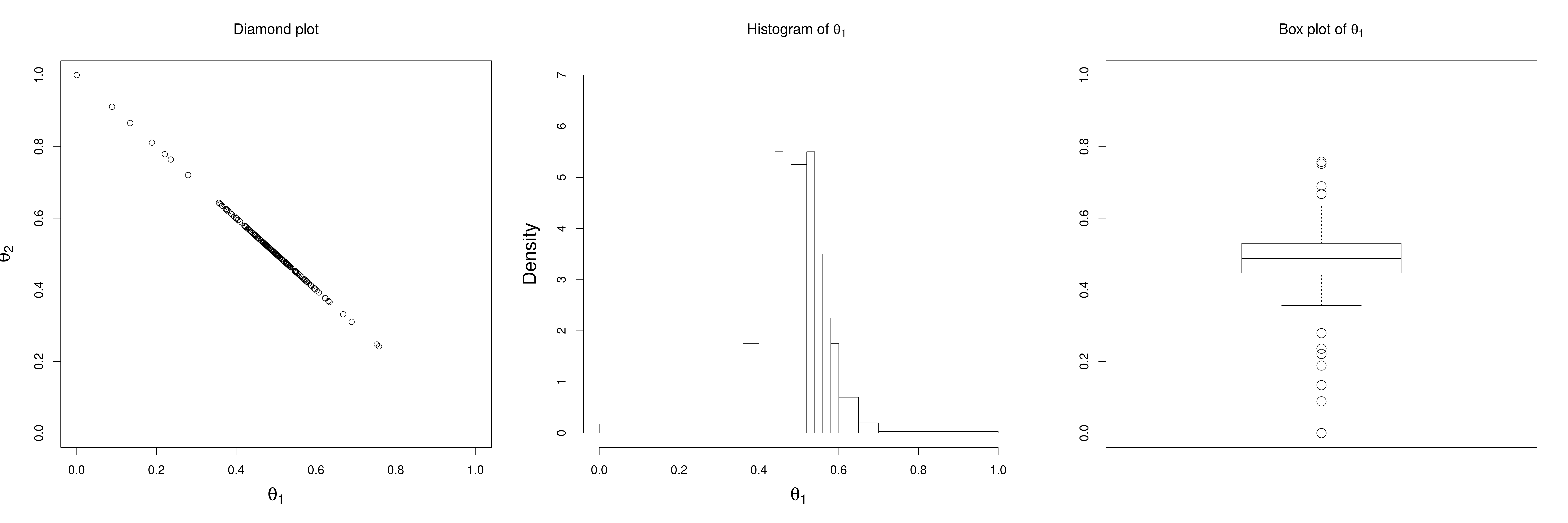}
\caption{Diamond plot restricted to the first quadrant for empirical angles thresholded using the 200
  largest $L_1$ norms, along with the histogram and boxplot.}
\vspace{-.20in}
\label{fig:1stQuadrant_facebook}
\end{figure}

To get more information about the dependence structure,
we construct a diamond plot using
 thresholding corresponding to the 200 largest $L_1$ norms of (out,in).
The scatter plot in Figure \ref{fig:fbwallscatter} is less clear
than for simulated data and shows
points  dispersed from the main cluster about the diagonal
and so the estimates of the
support of $\btheta$ in the diamond plot
are not as evident
as in Figure \ref{fig:1stQuadrant_facebook}. 

{We estimate the support interval } using  the interquartile range and obtain $[.4479,
.5305]=[\theta_l, \theta_u]$. This corresponds to slopes
$(a_l,a_u)=(\theta_u^{-1}-1,\theta_l^{-1}-1)=(.885,1.23).$ We also
include a boxplot of the values of $\theta_1$ corresponding to the 200
largest values of the $L_1$ norm of (out,in).

\vspace{.2in}
\begin{wrapfigure}{l}{0.35\textwidth}
\vspace{-.13in}
\centering
\includegraphics[width=.35\textwidth]{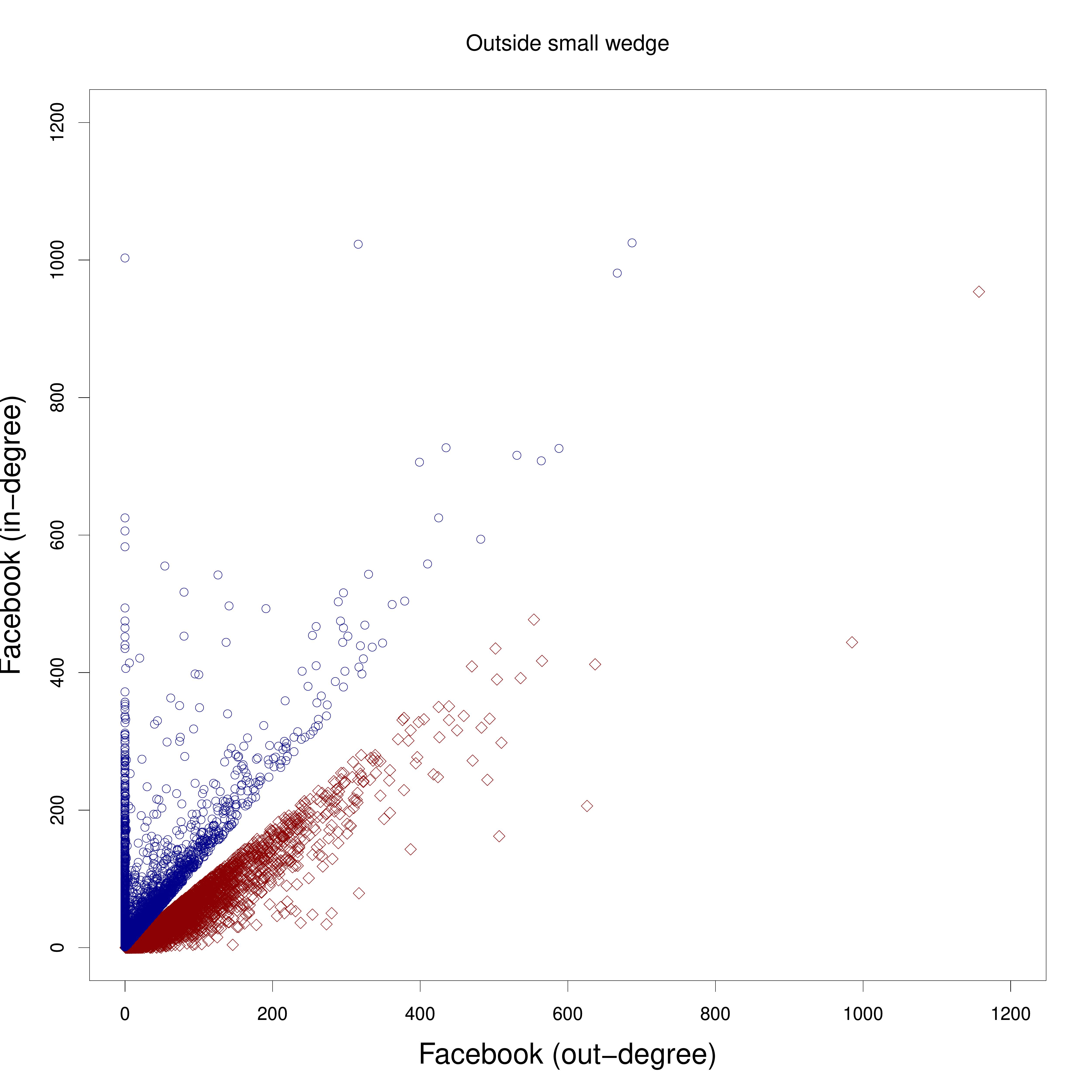}
\vspace{-.23in}
\caption{Points remaining after removal of $[\smallwedge]$.}
\vspace{-.10in}
\label{fig:minusWedge} 
\end{wrapfigure} 

Having determined $[\smallwedge]$, we remove it from the first
quadrant and use the remaining points, illustrated in Figure \ref{fig:minusWedge},
 to seek hidden regular variation.
Preliminary diagnostics use equations \eqref{eq:wedge1st} and
\eqref{eq:wedge2nd} corresponding to points above and below
$[\smallwedge]$ to estimate $\alpha_0$, the index of hidden regular
variation. There are 12,089 points above $[\smallwedge]$ in the region
we refer to as $[>\smallwedge]=\{(x,y): y>(1.23)x>0\}$ and 24,687
below in the region $[<\smallwedge]=\{(x,y): 0<y<(0.885)x\}.$

\begin{figure}[h]
\centering
\includegraphics[width=3.5in]{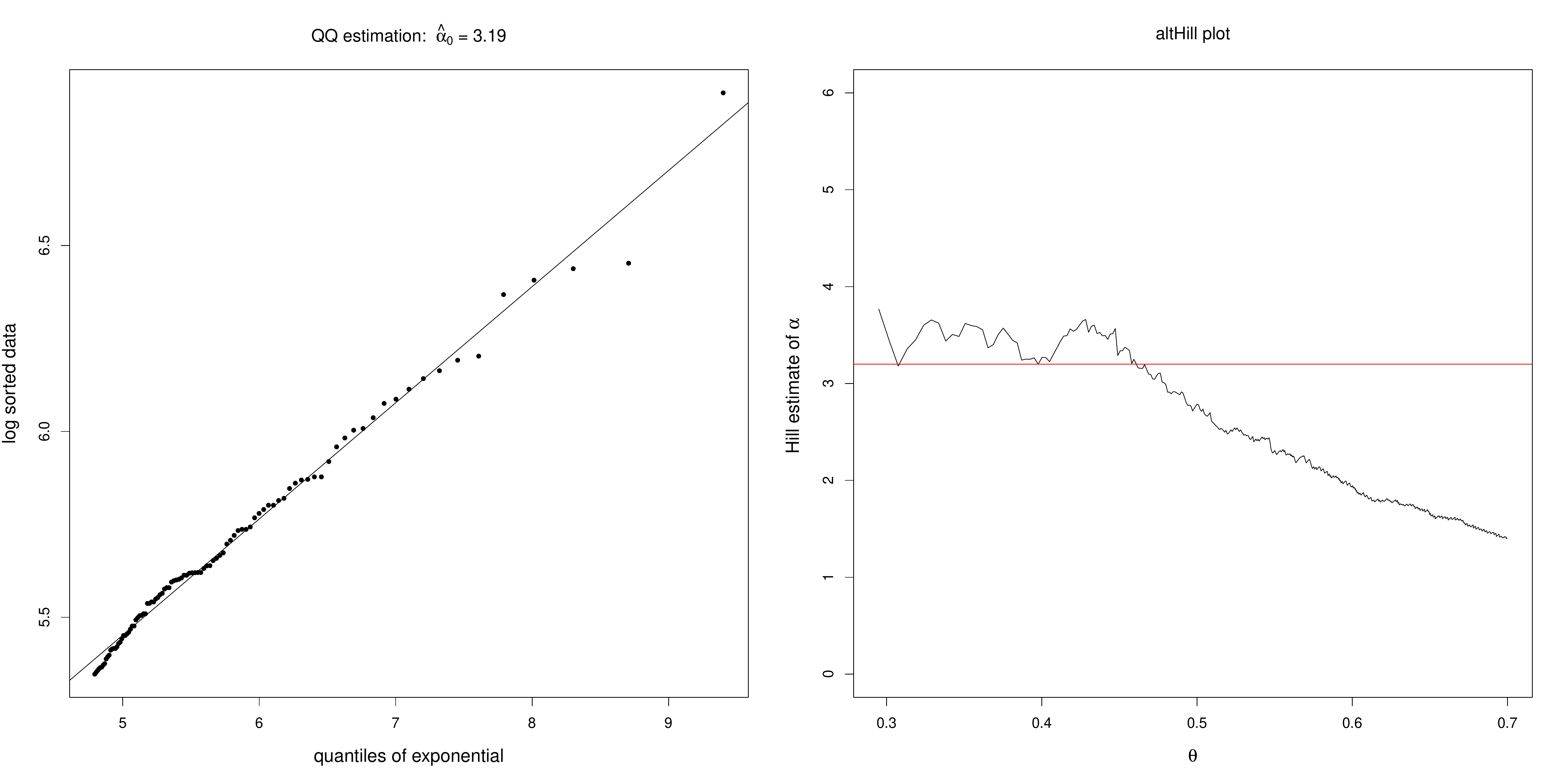}
\caption{Facebook above $[\smallwedge]$: QQ estimate is 3.2 for
  $k=100$ and altHill is 3.3; the red line is at height 3.3.}
\label{fig:facebk0alpha0}
\end{figure}

For the region  $[<\smallwedge]$, a combination of QQ and altHill
plotting gives a stable region for various values of $k$, the number
of upper order statistics, between 100-500 and a value of $\hat
\alpha_0=2.8$, the tail index of $(0.885)Z_1-Z_2$ from
\eqref{eq:wedge2nd}, which is not measurably different from $\hat
\alpha =2.8$ 
found for the marginal distributions of (out,in). This raises doubts
about the presence of HRV in the region $[<\smallwedge]$. For the
region  $[>\smallwedge]$ we use as data $(Z_2-(1.23)Z_1)_+$  and estimate $\hat
\alpha_0\approx 3.2$. The QQ-estimate is 3.2 and altHill gives about
3.3; both estimates are greater than $\alpha=2.8$ so there is evidence
of the existence of HRV in the region above $[\smallwedge]$. The QQ
and altHill plots for the region $[>\smallwedge]$ are given in
Figure \ref{fig:facebk0alpha0}. The evidence for HRV is further
strengthened by excellent Hillish plots described in Section
\ref{subsec:Hillish} applied to the generalized polar coordinates 
$(Z_2-(1.23)Z_1, Z_2/Z_1)$ as descibed in
\eqref{eq:>smallwedge}. 
Both Hillish plots in Figure \ref{fig:fb_gpolar} hug the horizontal
line at height 1.

\begin{figure}[h]
\centering
\includegraphics[height=2.5in]{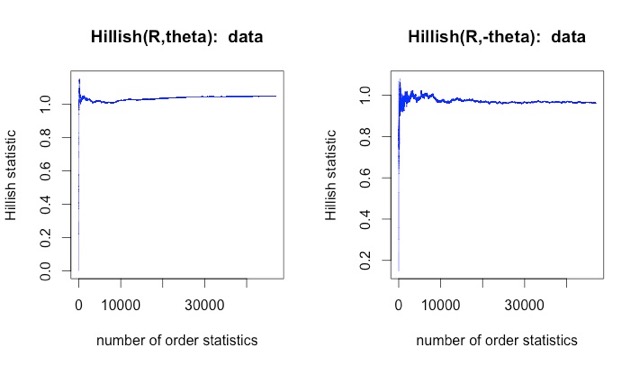}
\caption{Hillish plots for generalized polar coordinates of points in $[>\smallwedge]$.}
\label{fig:fb_gpolar}
\end{figure}

The diamond plot in Figure \ref{fig:1stQuadrant_facebook} shows the presence of points on the
  line $x_1+x_2=1$ with small values of $\theta$, corresponding to two
  dimensional points in $[<\smallwedge]$. Previously the 
  estimation of the range of the angular measure of the primary
  regular variation discounted these
  points. However, the estimation of the tail index of the distance to
  $[\smallwedge]$ being $2.8$, the same as the marginal distribution
  indices of (out,in),  suggests an alternate model which lumps
  together $[\smallwedge]\cup[<\smallwedge]$ as the region of
  concentration for the limit measure $\nu(\cdot)$ of the primary
  regular variation in \eqref{eq:RegVarMeas}. So our alternate model
  is regular variation on $\R_+^2 \setminus \{\bzero\}$ with index
  $2.8$ and limit measure which concentrates on $\{\bx \in
  \R_+^2\setminus \{\bzero\}: x_2/x_1<1.23\}$ and hidden regular variation
  on $\R_+^2\setminus ([\smallwedge]\cup [<\smallwedge])$ with index 3.3.

\subsection{Exxon and Chevron returns.}\label{subsec:oil}

\begin{figure}[h]
\centering
\includegraphics[width=3.in]{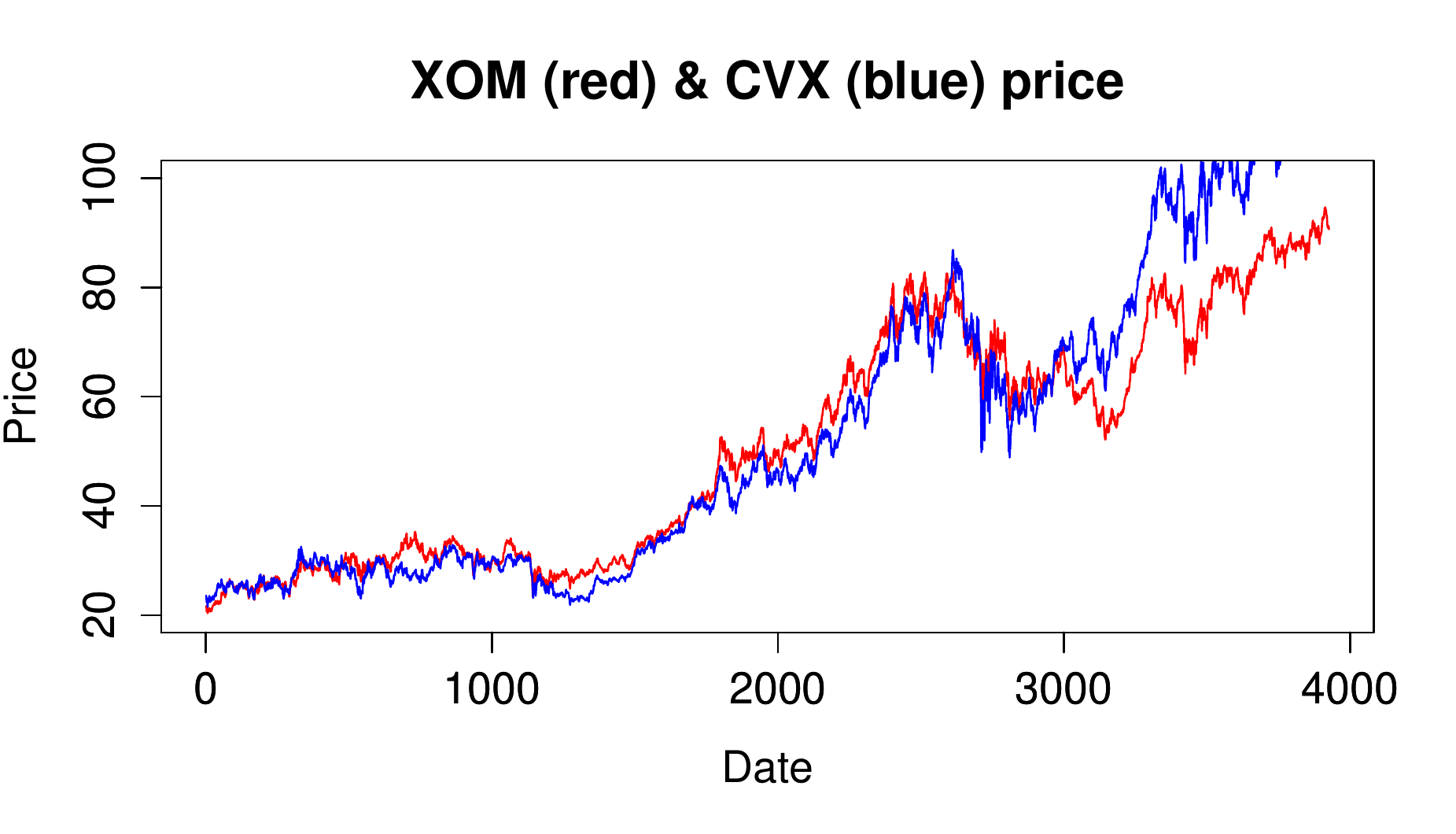}
\includegraphics[width=3.in]{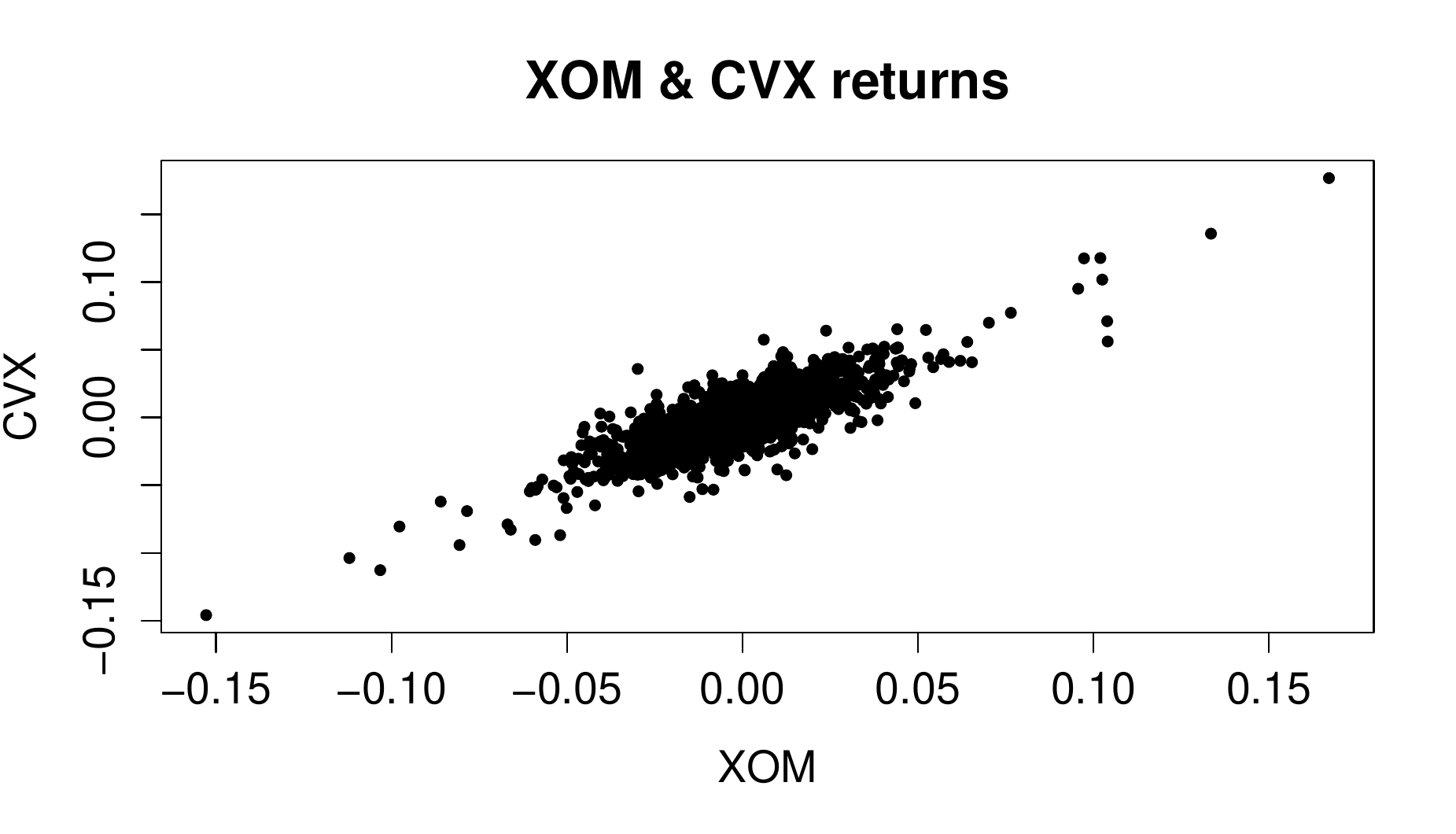}
\vspace{-.2in}
\caption{Stock prices and scatterplot of Chevron and Exxon returns.}
\label{fig:scatter}
\end{figure}
For this example of financial returns, the state space is
$\R^2\setminus \{\bzero\}$ and for the HRV property we could try deleting
$[\smallwedge_+]\subset \R_+^2$ in the first quadrant and
$[\smallwedge_-] \subset (-\infty,0)^2$ in the third quadrant. 
For illustration, we concentrate on deleting only a wedge from the first
quadrant and seeking HRV with points above the upper boundary of 
$[\smallwedge_+]$.
This is done partly because there is no guarantee that HRV will hold
globally.
 The data consists of closing daily prices of Exxon (XOM) and Chevron (CVX)
from January 2, 1998 to August 9, 2013.
For each variable we  calculate
daily returns
for each company 
called (exxonr, chevronr). The length of the return vector is 3925.
One expects strong
dependence from two big companies engaged in similar economic activities and
this is shown in the raw scatter plot of the variables in Figure \ref{fig:scatter}.

The four tails of the variables ($\pm$exxonr, $\pm$chevronr) are
quite similar. Based on analyses (not shown) using the QQ
estimator, Hill and altHill plots,
(eg. \cite{kratz:resnick:1996}, \cite[p. 101, 366]{resnickbook:2007})
we estimate 
\begin{wrapfigure}{l}{.45\textwidth}
\includegraphics[width=3in]{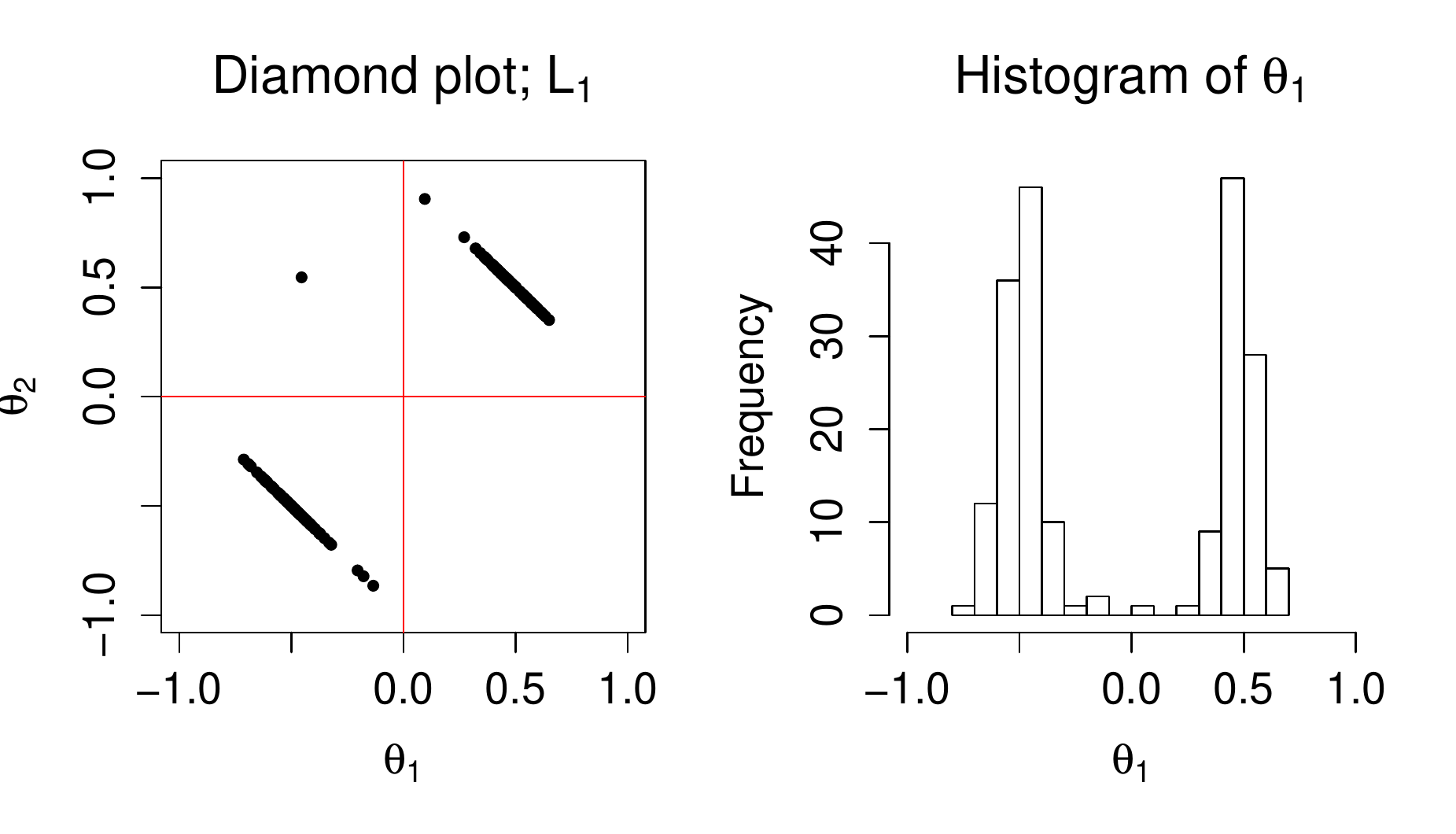}
\caption{Diamond plot for 200 largest values  under
  $L_1$ norm for (exxonr,chevronr) with histogram.} 
\label{fig:oil_diamond}
\end{wrapfigure}
the marginal tail indices $\alpha =2.7$ in all four cases. Since
the tails are estimated to have the same $\alpha$, we did 
not  attempt  to standardize the variables to $\alpha=1$ as is often done by
either the power method or the ranks transform.

To understand the dependence structure of the variables
(exxonr,chevronr), we {make} a diamond plot of the data.
We do the mapping after thresholding the data at various values
 determined by $k$, the number
of order statistics of the norms 
$|x_1|+|x_2|$. This is the two-tail  empirical equivalent to
\eqref{eq:regVarEPolar} using the $L_1$ norm. After experimenting with
thresholds, we settled on $k=200$ which in the first quadrant produced
a range of $\theta_1=x_1/(x_1+x_2)$ equal to $(.095,.649).$ {We
finalized our estimate of  the support of the limit angular measure},
by using the 10\% and 90\% quantiles of
the values of $\theta_1$  as
$(.393,.589)$. This corresponds to slope estimates for $[\smallwedge]$
of $(\hat a_l,\hat a_u)= (.698,1.545)$. 
The strong asymptotic dependence
  among the marginals is evident 
from the diamond plot and histogram of $\theta_1$ in Figure
\eqref{fig:oil_diamond}.
There is little evidence that a large positive
change in one variable is accompanied by a large negative change in the
other as shown by the lack of points in the second and fourth
quadrants in Figure \ref{fig:oil_diamond}. 
There is no visual evidence
supporting the hypothesis of full asymptotic dependence.

\begin{wrapfigure}{r}{.5\textwidth}
\centering
\includegraphics[width=.45\textwidth]{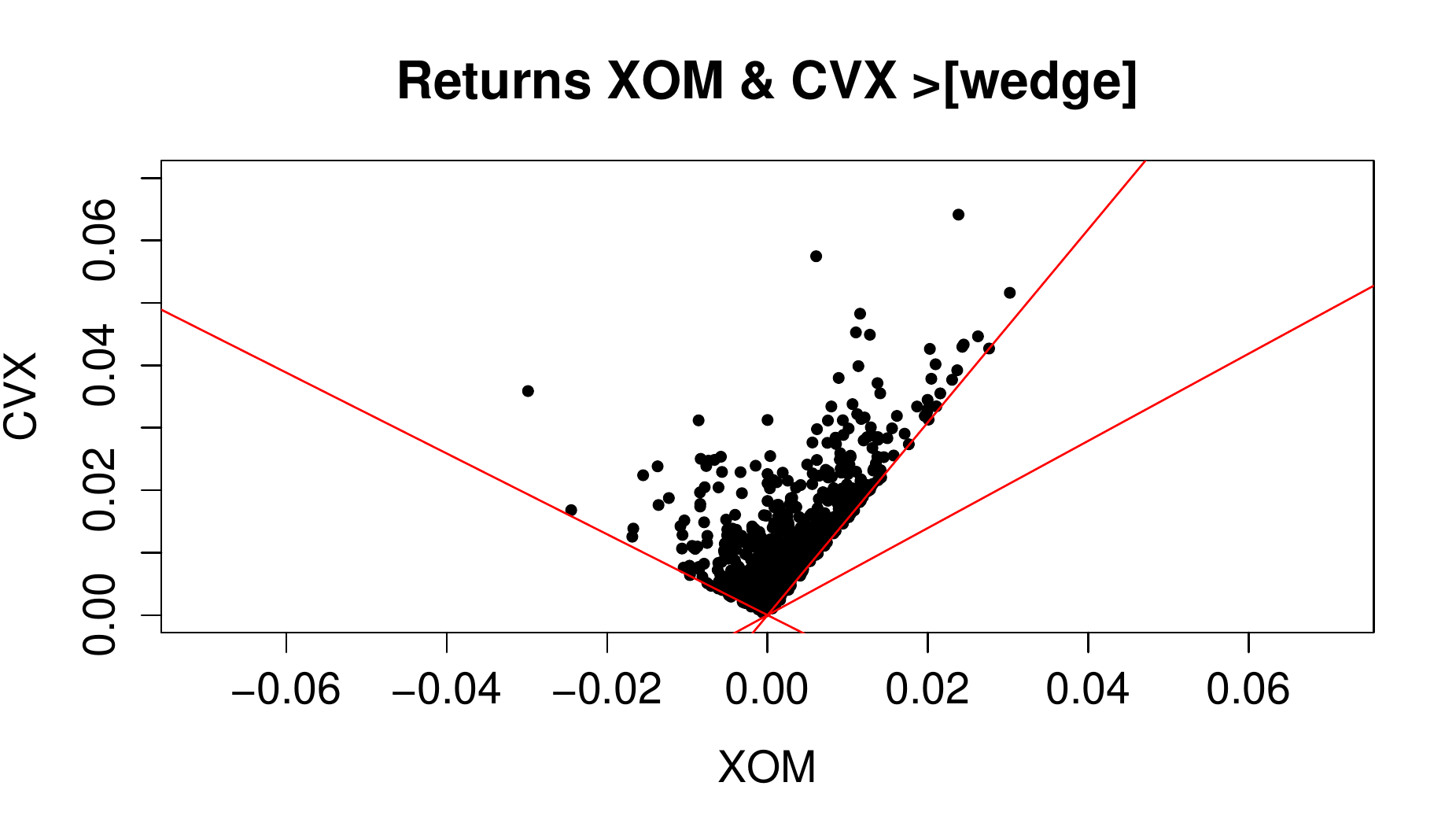}
\caption{Points satisfying three conditions allowing computation of
  gpolar coordinates. Scanning counterclockwise, the first two rays
  define $[\smallwedge_+]$ and the third ray is perpendicular to the
  upper boundary of $[\smallwedge_+]$.}
\label{fig:niceOilypts}
\end{wrapfigure}
Remark \ref{rem:genius} suggests verifying 
the necessary condition for HRV on $\R^2\setminus
[\smallwedge_+]$ by computing the tail index of what is essentially the distance
of a point to $[\smallwedge_+]$.  We seek evidence 
of regular variation on $\R^2\setminus 
[\smallwedge_+]$ by  using points $\bx$ of the
return sample that satisfy,
\begin{enumerate}
\item $x_2>0$ (points above the horizontal axis);
\item $x_2-(1.545)x_1>0$ (points in the first or second quadrant above the
  ray $x_2=1.545x_1,\,x_1>0$)
\item $x_1+1.545x_2>0$ (points in the first or second quadrant
in the region bounded by the ray $x_2=1.545x_1,\,x_1>0$ and the ray
perpendicular to this ray emanating from the origin into the third
quadrant).
Points to the left of this perpendicular would be closer to
$[\smallwedge_-]$ rather than $[\smallwedge_+]$ and are excluded.
\end{enumerate}
There are 706 points satisfying the three conditions; we call these
points {\it oilReturns2013Gr\/}.  These are plotted in Figure
\ref{fig:niceOilypts}. The angle between the two rays of biggest slope is 90 degrees.

We estimate the tail index $\alpha_0$ of the distance of points in {\it
  oilReturns2013Gr\/} to the boundary of $[\smallwedge_+]$ 
to be greater than $\alpha =2.7$ using altHill, Hill and QQ
plots. This corresponds to estimating the tail index of $Z_2-a_uZ_1$
as in \eqref{eq:wedge1st}. The plots are
given next.
\begin{figure}[h]
\includegraphics[width=2.in,height=1.5in]{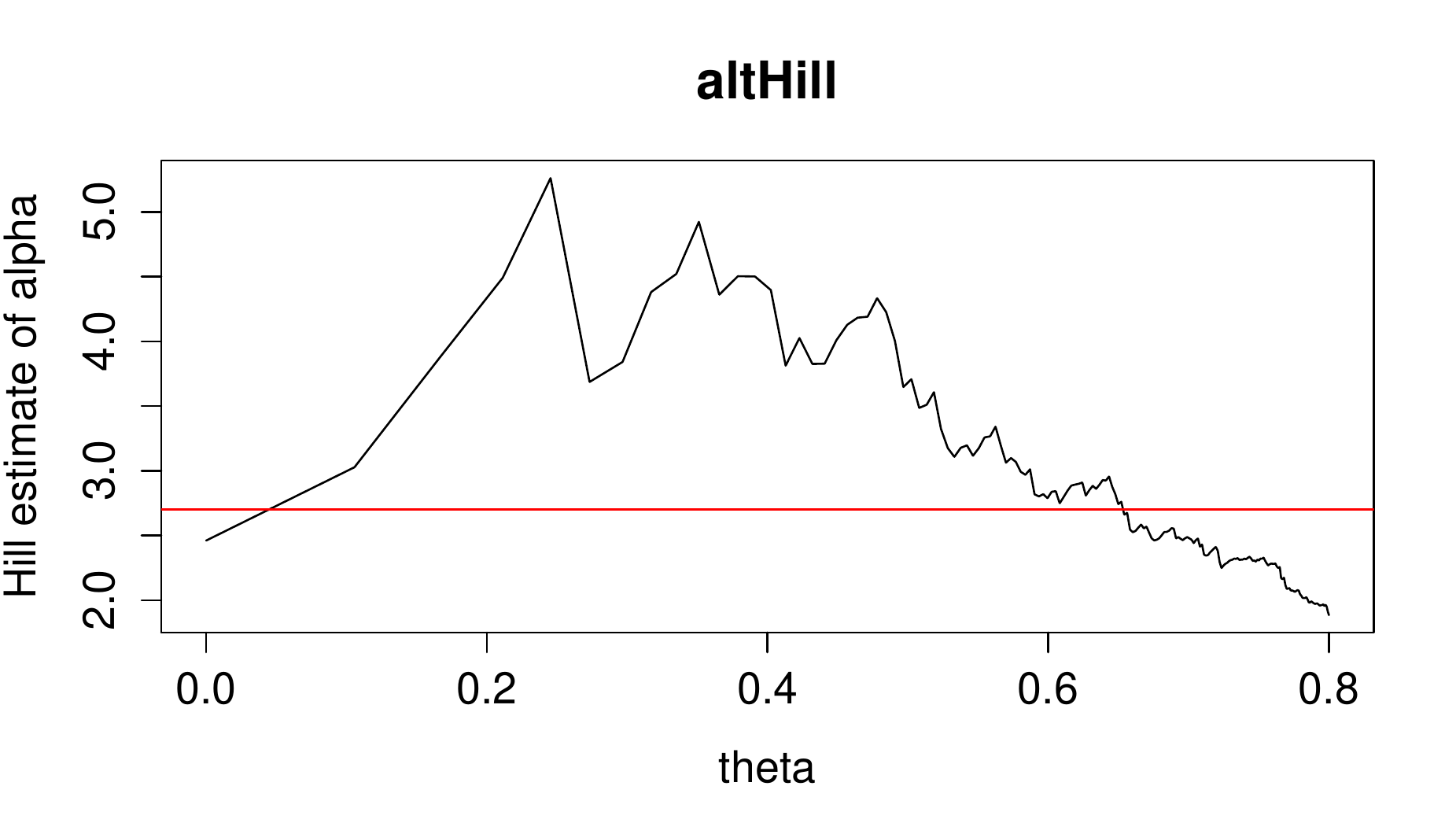}
\includegraphics[width=2.in,height=1.5in]{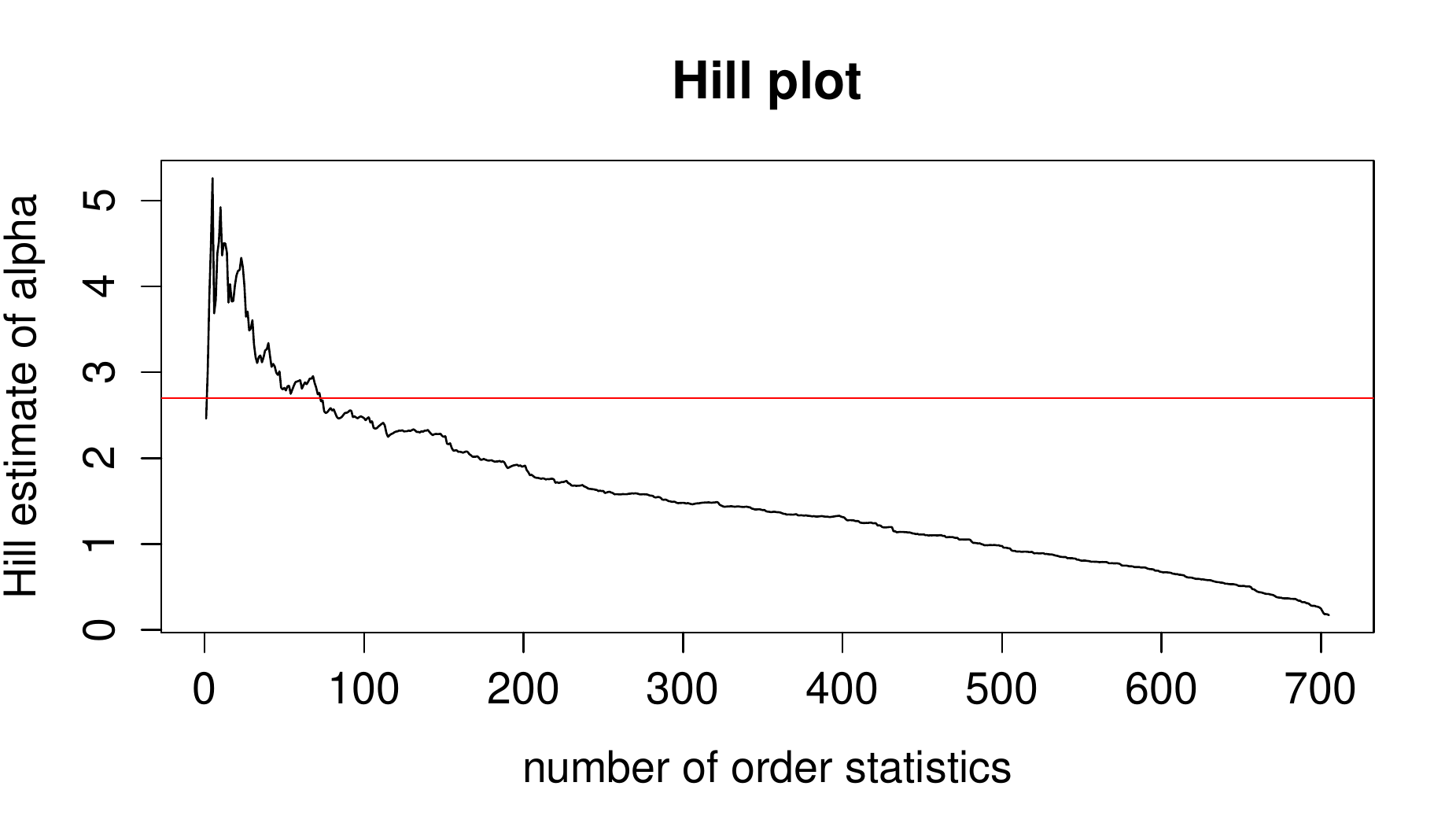}
\includegraphics[width=2.in,height=1.5in]{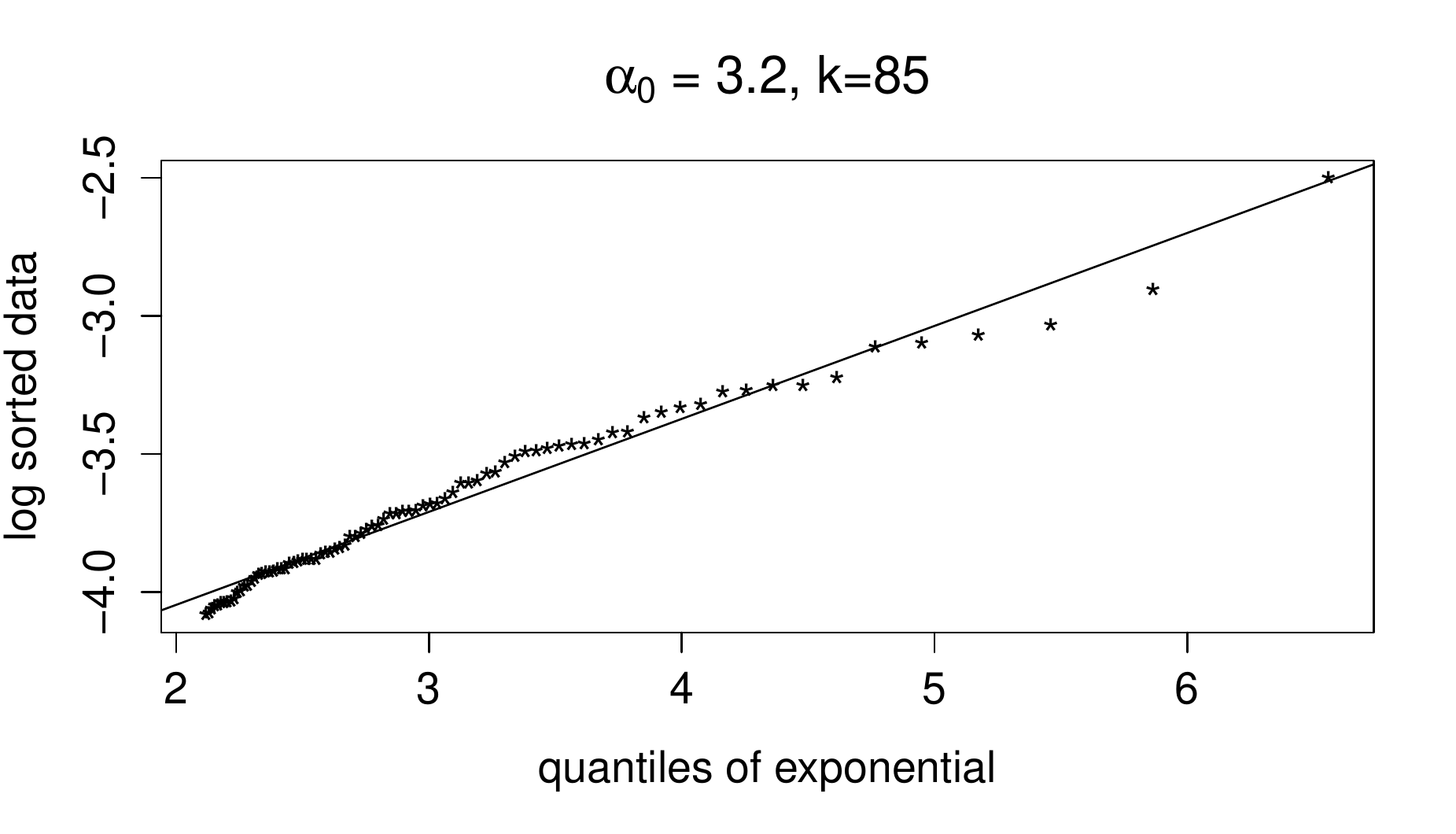}
\caption{AltHill, Hill and QQ plots to estimate $\alpha_0$. The red
  horizontal lines are drawn at height $\alpha=2.7$.}
\label{fig:csReturnsGr}
\end{figure}

\begin{figure}[h]
\vspace{-.34in}
\centering
\includegraphics[width=.6\textwidth]{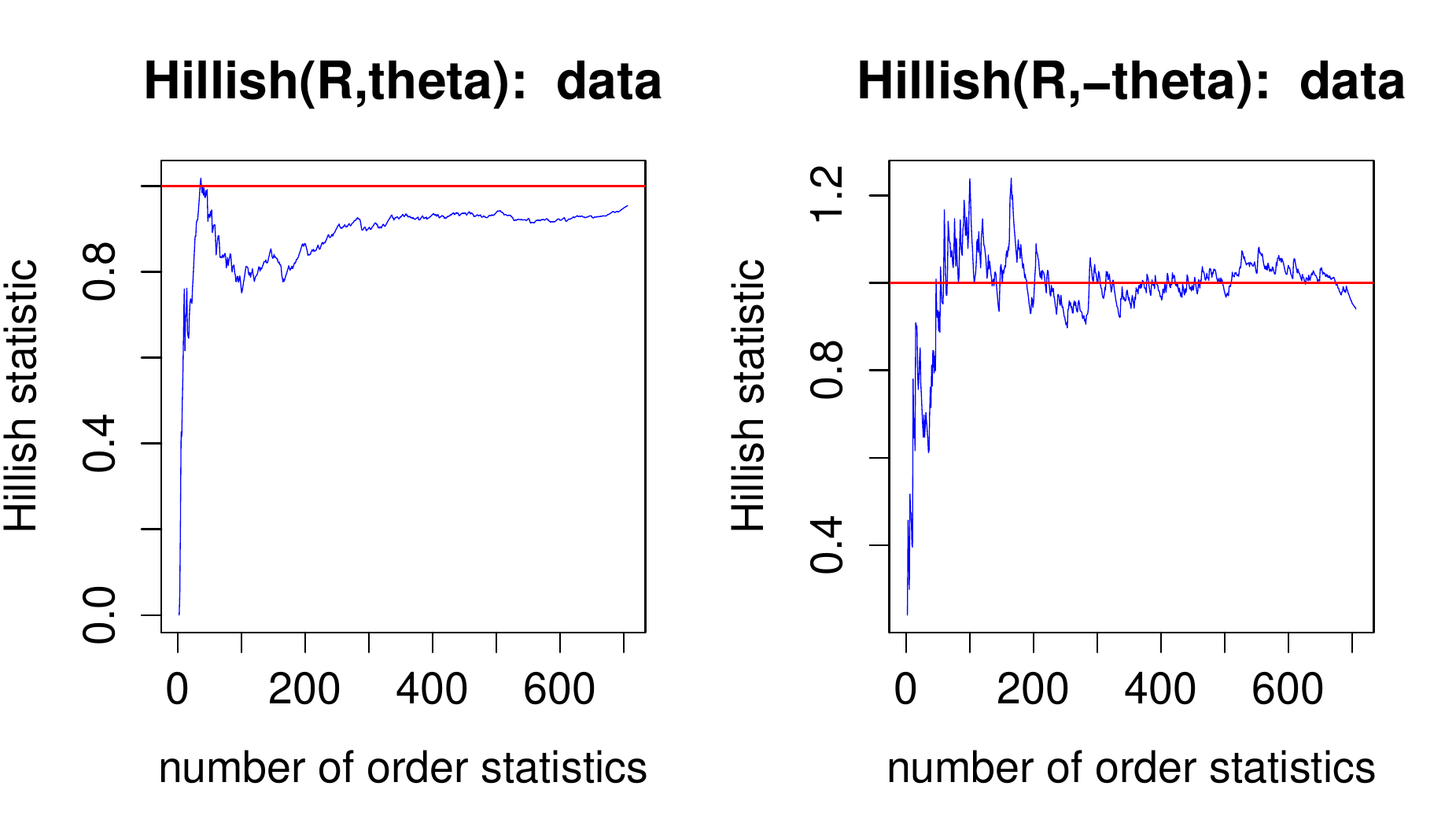}
\caption{Hillish plots for generalized polar coordinates of points in
  {\it oilReturns2013Gr\/}.}
\label{fig:HillisPosRet}
\end{figure}
More evidence for existence of HRV in the region corresponding to {\it
  oilReturns2013Gr\/} is provided by converting the data points in
this region using the generalized polar coordinates
 suggested by \eqref{eq:>smallwedge}. This produces the Hillish
plots given in Figure \ref{fig:HillisPosRet}. Both Figures
\ref{fig:csReturnsGr} and \ref{fig:HillisPosRet} are encouraging.

\section{Conclusions}\label{sec:conc}
Whenever the limit measure of multivariate regular variation
concentrates on a cone smaller than the full state space, there is the
potential for seeking hidden regular variation. This idea has been
most often applied to the case of asymptotic independence where the
limit measure concentrates on the axes. Here we have shown the idea is
also applicable when the limit measure concentrates on the diagonal or
a narrow cone such as $[\smallwedge]$. 

Without hidden regular variation, asymptotic independence causes
analysts to miss risk 
contagion. Analogously, when the limit measure concentrates on the diagonal,
analysis would estimate the probability of a risk region $\{(x,y):
y-x>4\}$ to be zero when in fact, hidden regular variation would yield
a small but non-zero probability. {Our data analyses show potential for  such estimation in strongly dependent data with heavy-tailed marginal distributions.}

Without doubt, much work remains to be done on implementation. Both our
network data which is node based and our returns data is nothing like
independent replicated data. {Also, our methods for estimating the
support of the angular measure $S(\cdot)$ are primitive at
best. Higher dimensional examples present increased visualization and
estimation difficulties. }
None-the-less, we believe the worked out examples are
useful and illustrate practical cases. Other examples exist and in
particular we have analyzed Microsoft vs Dell returns with results
similar to those found in Section \ref{subsec:oil}. 

\section{Acknowledgements}
We acknowledge with thanks the contribution in fall 2014 of Amy Willis
who skilfully analyzed many financial data sets seeking 
examples of asymptotic full and strong asymptotic dependence. We are also
grateful to Paul Embrechts who read an early draft and had many useful
and encouraging comments.
Two referees made many helpful and insightful comments. 

B. Das was supported by  MOE-2013-T2-1-158 and IDG31300110. B. Das also acknowledges
  hospitality from Cornell University during visits in June 2015 and January 2016. S. Resnick was supported by Army MURI grant
  W911NF-12-1-0385 to Cornell University.

\bibliographystyle{plain}


\begin{thebibliography}{100}

\bibitem{anderson:meerschaert:1998}
P.L. Anderson and M.M. Meerschaert.
\newblock {Modeling river flows with heavy tails.}
\newblock {\em Water Resources Research}, 34(9):2271--2280, 1998.

\bibitem{beirlant:vynckier:teugels:1996}
J.~Beirlant, P.~Vynckier, and J.~Teugels.
\newblock Tail index estimation, {P}areto quantile plots, and regression
  diagnostics.
\newblock {\em J. Amer. Statist. Assoc.}, 91(436):1659--1667, 1996.

\bibitem{bingham:goldie:teugels:1989}
N.~H. Bingham, C.~M. Goldie, and J.~L. Teugels.
\newblock {\em Regular variation}, volume~27 of {\em Encyclopedia of
  Mathematics and its Applications}.
\newblock Cambridge University Press, Cambridge, 1989.

\bibitem{bollobas:borgs:chayes:riordan:2003}
B.~Bollob\'as, C.~Borgs, J.~Chayes, and O.~Riordan.
\newblock Directed scale-free graphs.
\newblock In {\em Proceedings of the Fourteenth Annual ACM-SIAM Symposium on
  Discrete Algorithms (Baltimore, 2003)}, pages 132--139, New York, 2003. ACM.

\bibitem{crovella:bestavros:taqqu:1998}
M.~Crovella, A.~Bestavros, and M.S. Taqqu.
\newblock Heavy-tailed probability distributions in the world wide web.
\newblock In M.S.~Taqqu R.~Adler, R.~Feldman, editor, {\em A Practical Guide to
  Heavy Tails: Statistical Techniques for Analysing Heavy Tailed
  Distributions}. Birkh{\"a}user, Boston, 1999.

\bibitem{csardi:nepusz:2006}
G.~Csardi and T.~Nepusz.
\newblock The igraph software package for complex network research.
\newblock {\em InterJournal, Complex Systems}, 1695(5):1--9, 2006.

\bibitem{das:embrechts:fasen:2013}
B.~Das, P.~Embrechts, and V.~Fasen.
\newblock Four theorems and a financial crisis.
\newblock {\em The International Journal of Approximate Reasoning},
  54(6):701--716, 2013.

\bibitem{das:mitra:resnick:2013}
B.~Das, A.~Mitra, and S.I. Resnick.
\newblock Living on the multidimensional edge: seeking hidden risks using
  regular variation.
\newblock {\em Advances in Applied Probability}, 45(1):139--163, 2013.

\bibitem{das:resnick:2011}
B.~Das and S.I. Resnick.
\newblock Conditioning on an extreme component: Model consistency with regular
  variation on cones.
\newblock {\em Bernoulli}, 17(1):226--252, 2011.

\bibitem{das:resnick:2011b}
B.~Das and S.I. Resnick.
\newblock Detecting a conditional extreme value model.
\newblock {\em Extremes}, 14(1):29--61, 2011.

\bibitem{das:resnick:2015}
B.~Das and S.I. Resnick.
\newblock Models with hidden regular variation: generation and detection.
\newblock {\em Stochastic Systems}, 5:195--238 (electronic), 2015.
\newblock \url{http://www.i-journals.org/ssy/viewarticle.php?id=141}.

\bibitem{dehaan:ferreira:2006}
L.~de~Haan and A.~Ferreira.
\newblock {\em Extreme Value Theory: An Introduction}.
\newblock Springer-Verlag, New York, 2006.

\bibitem{drees:dehaan:resnick:2000}
H.~Drees, L.~{de Haan}, and S.I. Resnick.
\newblock How to make a {H}ill plot.
\newblock {\em Ann. Statist.}, 28(1):254--274, 2000.

\bibitem{durrett:2010}
R.T. Durrett.
\newblock {\em Random Graph Dynamics}.
\newblock Cambridge Series in Statistical and Probabilistic Mathematics.
  Cambridge University Press, Cambridge, 2010.

\bibitem{embrechts:kluppelberg:mikosch:1997}
P.~Embrechts, C.~Kl\"{u}ppelberg, and T.~Mikosch.
\newblock {\em Modelling Extreme Events for Insurance and Finance}.
\newblock Springer-Verlag, Berlin, 1997.

\bibitem{heffernan:resnick:2005}
J.E. Heffernan and S.I. Resnick.
\newblock Hidden regular variation and the rank transform.
\newblock {\em Advances in Applied Probability}, 37(2):393--414, 2005.

\bibitem{heffernan:resnick:2007}
J.E. Heffernan and S.I. Resnick.
\newblock Limit laws for random vectors with an extreme component.
\newblock {\em The Annals of Applied Probability}, 17(2):537--571, 2007.

\bibitem{heffernan:tawn:2004}
J.E. Heffernan and J.A. Tawn.
\newblock A conditional approach for multivariate extreme values (with
  discussion).
\newblock {\em Journal of the Royal Statistical Society, Series B},
  66(3):497--546, 2004.

\bibitem{hult:lindskog:2006b}
H.~Hult and F.~Lindskog.
\newblock On regular variation for infinitely divisible random vectors and
  additive processes.
\newblock {\em Adv. in Appl. Probab.}, 38(1):134--148, 2006.

\bibitem{hult:lindskog:2006a}
H.~Hult and F.~Lindskog.
\newblock Regular variation for measures on metric spaces.
\newblock {\em Publications de l'Institut Math\'{e}matique, Nouvelle
  S\'{e}rie}, 80(94):121--140, 2006.

\bibitem{ibragimov:jaffee:walden:2011}
R.~Ibragimov, D.~Jaffee, and J.~Walden.
\newblock Diversification disasters.
\newblock {\em Journal of Financial Economics}, 99(2):333--348, 2011.

\bibitem{kratz:resnick:1996}
M.~Kratz and S.I. Resnick.
\newblock The qq--estimator and heavy tails.
\newblock {\em Stochastic Models}, 12:699--724, 1996.

\bibitem{lindskog:resnick:roy:2014}
F.~Lindskog, S.I. Resnick, and J.~Roy.
\newblock Regularly varying measures on metric spaces: hidden regular variation
  and hidden jumps.
\newblock {\em Probability Surveys}, 11:270--314, 2014.

\bibitem{resnick:2002a}
S.I. Resnick.
\newblock Hidden regular variation, second order regular variation and
  asymptotic independence.
\newblock {\em Extremes}, 5(4):303--336, 2002.

\bibitem{resnickbook:2007}
S.I. Resnick.
\newblock {\em Heavy Tail Phenomena: Probabilistic and Statistical Modeling}.
\newblock Springer Series in Operations Research and Financial Engineering.
  Springer-Verlag, New York, 2007.

\bibitem{resnickbook:2008}
S.I. Resnick.
\newblock {\em Extreme Values, Regular Variation and Point Processes}.
\newblock Springer Series in Operations Research and Financial Engineering.
  Springer, New York, 2008.
\newblock Reprint of the 1987 original.

\bibitem{resnick:2008}
S.I. Resnick.
\newblock Multivariate regular variation on cones: application to extreme
  values, hidden regular variation and conditioned limit laws.
\newblock {\em Stochastics: An International Journal of Probability and
  Stochastic Processes}, 80:269--298, 2008.

\bibitem{resnick:samorodnitsky:2015}
S.I. {Resnick} and G.~{Samorodnitsky}.
\newblock Tauberian theory for multivariate regularly varying distributions
  with application to preferential attachment networks.
\newblock {\em Extremes}, 18(3):349--367, 2015.

\bibitem{resnick:starica:1997a}
S.I. Resnick and C.~St\u{a}ric\u{a}.
\newblock Smoothing the {H}ill estimator.
\newblock {\em Adv. Applied Probab.}, 29:271--293, 1997.

\bibitem{resnick:samorodnitsky:towsley:davis:willis:wan:2016}
G.~{Samorodnitsky}, S.~{Resnick}, D.~{Towsley}, R.~{Davis}, A.~{Willis}, and
  P.~{Wan}.
\newblock {Nonstandard regular variation of in-degree and out-degree in the
  preferential attachment model}.
\newblock {\em Journal of Applied Probability}, 53(1):146--161, March 2016.
\newblock \url{http://arxiv.org/pdf/1405.4882.pdf}.

\bibitem{smith:2003}
R.L. Smith.
\newblock Statistics of extremes, with applications in environment, insurance
  and finance.
\newblock In B.~Finkenstadt and H.~Rootz\'en, editors, {\em SemStat: Seminaire
  Europeen de Statistique, Exteme Values in Finance, Telecommunications, and
  the Environment}, pages 1--78. Chapman-Hall, London, 2003.

\bibitem{viswanath:2009}
B.~Viswanath, A.~Mislove, M.~Cha, and K.P. Gummadi.
\newblock On the evolution of user interaction in facebook.
\newblock In {\em Proceedings of the 2nd ACM SIGCOMM Workshop on Social
  Networks (WOSN'09)}, August 2009.

\end{thebibliography}

\def\cprime{$'$}

\end{document}